\newcommand{\Fb}{\mathbb{F}}
\newcommand{\Bbf}{\mathbf{B}}
\newcommand{\Ccal}{\mathcal{C}}
\newcommand{\Ecal}{\mathcal{E}}
\newcommand{\Pcal}{\mathcal{P}}
\newcommand{\Rcal}{\mathcal{R}}
\newcommand{\Scal}{\mathcal{S}}
\newcommand{\Xcal}{\mathcal{X}}
\newcommand{\Ycal}{\mathcal{Y}}
\newcommand{\Zcal}{\mathcal{Z}}
\newcommand{\Rb}{\mathbb{R}}
\DeclareMathOperator*{\bigtimes}{\textnormal{\Large $\times$}} 
\DeclareMathOperator*{\supp}{supp}  
\theoremstyle{plain}
\newtheorem{lemma}{Lemma}
\newtheorem{thm}[lemma]{Theorem}
\newtheorem{prop}[lemma]{Proposition}
\newtheorem{cor}[lemma]{Corollary}
\theoremstyle{definition}
\newtheorem{defi}[lemma]{Definition}
\newtheorem{ex}[lemma]{Example}
\theoremstyle{remark}
\newtheorem{rem}[lemma]{Remark}
\newcommand{\CI}[2]{\left.#1 \Perp #2 \CIhelper}
\newcommand{\CIhelper}[1][\relax]{\def\argone{#1}\def\rrrrrrrelax{\relax}
  \ifx\argone\rrrrrrrelax\right.\else\,\middle|\,#1\right.{}\fi}
\newcommand{\ol}{\overline}
\newcommand{\pin}{p_\text{\textup{in}}}     
\newcommand{\Xvin}{X_\text{\textup{in}}}    
\newcommand{\Xin}{\Xcal_\text{\textup{in}}} 
\newcommand{\Xtot}{\Xcal}                   
\newcommand{\fourcubevert}{
    \path (0,0) coordinate (X0);
    \path (2,0) coordinate (X1);
    \path (0,2) coordinate (X2);
    \path (2,2) coordinate (X3);
    \path (0.4,0.3) coordinate (X4);
    \path (2.4,0.3) coordinate (X5);
    \path (0.4,2.3) coordinate (X6);
    \path (2.4,2.3) coordinate (X7);

    \path (0.6,0.5) coordinate (X8);
    \path (1.4,0.5) coordinate (X9);
    \path (0.6,1.3) coordinate (X10);
    \path (1.4,1.3) coordinate (X11);
    \path (1.0,0.8) coordinate (X12);
    \path (1.8,0.8) coordinate (X13);
    \path (1.0,1.6) coordinate (X14);
    \path (1.8,1.6) coordinate (X15);}
\title{Robustness, Canalyzing functions and Systems design} 
\author{Johannes Rauh}
\author{Nihat Ay}
\email{\{rauh, nay\}@mis.mpg.de}
\address[J.~Rauh\and N.~Ay]{Max Planck Institute for Mathematics in the Sciences,
Inselstrasse 22, 
D-04103 Leipzig,Germany}
\address[N.~Ay]{Santa Fe Institute,
1399 Hyde Park Road,
Santa Fe, New Mexico 87501, USA}
\date{\today}
\keywords{robustness, conditional independence, Markov kernels}  
\begin{document}

\begin{abstract}
  We study a notion of robustness of a Markov kernel that describes a system of several 
  input random variables and one output random variable.  Robustness requires that the behaviour of the system does not
  change if one or several of the input variables are knocked out.  If the system is required to be robust against too
  many knockouts, then the output variable cannot distinguish reliably between input states and must be independent of
  the input.  We study how many input states the output variable can distinguish as a function of the required level of
  robustness.
  
  Gibbs potentials allow a mechanistic description of the behaviour of the system after knockouts.  Robustness imposes
  structural constraints on these potentials.  We show that interaction families of Gibbs potentials allow to describe
  robust systems.

  Given a distribution of the input random variables and the Markov kernel describing the system, we obtain a joint
  probability distribution.  Robustness implies a number of conditional independence statements for this joint
  distribution.  The set of all probability distributions corresponding to robust systems can be decomposed into a
  finite union of components, and we find parametrizations of the components.
  The decomposition corresponds to a primary decomposition of the conditional independence ideal and can be derived from
  more general results about generalized binomial edge ideals.
\end{abstract}

\maketitle

\section{Introduction}
\label{sec:introduction}

Consider a stochastic system of $n$ input nodes and one output node:

  \begin{tikzpicture}
    \matrix (m) [matrix of math nodes, text height=1.5ex, text depth=0.25ex, column sep=2.5em, row sep=1em]
    { \text{input:} & |(X1)| X_1 & |(X2)| X_2 & |(X3)| X_3 & \cdots & |(X4)| X_n \\
      \text{system}  & & &   & &  \\
      \text{output:} & & & |(Y)| Y & &  \\
    };
    \foreach \i in {1,2,3,4} { \path[-angle 90] (X\i) edge (Y); }
  \end{tikzpicture}
\\
As shown in~\cite{AyKrakauer2007:Geometric_Robustness_Theory}, there are two ingredients to robustness:
\begin{enumerate}
\item If one or several of the input nodes are removed, the system behaviour should not change too much (``small
  exclusion dependence'').
\item A causal contribution of the input nodes on the output nodes.
\end{enumerate}
The second point is strictly necessary: If the behaviour of the output does not depend on the inputs at all, then it is
usually not affected by a knockout of a subset of the inputs, but this exclusion independence is trivial.

In this paper we do not use the information theoretic measures proposed
in~\cite{AyKrakauer2007:Geometric_Robustness_Theory}.  Instead, we start with a simple model of exclusion independence:
We study systems in which the behaviour of the output node does not change when one or more of the input nodes are
knocked out.  We formalize our robustness requirements in terms of a \emph{robustness specification}~$\Rcal$, which
consists of pairs $(R,x_{R})$, where $R$ is a subset of the inputs and $x_{R}$ is a joint state of the inputs
in~$R$.  Let $\Scal$ be a set of possible states of the input nodes.  The system is $\Rcal$-robust in~$\Scal$, if the
behaviour of the system does not change if the inputs not in $R$ are knocked out, provided that the inputs in $R$
are in the state~$x_{R}$ and the current state of all inputs belongs to~$\Scal$.

If the robustness specification $\Rcal$ is too large, or if the set $\Scal$ is too large, then in any $\Rcal$-robust
system the output does not depend on the input at all.  In general, the behaviour of the system is restricted by
robustness requirements.  Therefore, to study the causal contribution of the input nodes on the output nodes, we
investigate how varied the behaviour of a system can be, given both $\Rcal$ and~$\Scal$.  More precisely, robustness
specifications imply that the system cannot distinguish all input states, and we may ask how many states the system can
discern.  This question is related to the topic of error detecting codes, see Remark~\ref{rem:coding}.

This paper is organized as follows: Section~\ref{sec:robust-canal} contains our basic setting and definitions.  We find
several equivalent formulations of our notion of robustness.  Moreover, we study the question how many states an
$\Rcal$-robust system can distinguish.  Section~\ref{sec:canalyzing-functions} shows that our definitions generalize the
notions of canalyzing~\cite{Kauffman93:Origins_of_Order} and nested canalyzing
functions~\cite{JarrahRaposaLaubenbacher07:Nested_Canalyzing_and_other_Functions}, which have been studied before in the
context of robustness.  Section~\ref{sec:robustn-mark-kern} proposes to model the different behaviours of a system under
various knockouts using a family of Gibbs potentials.  Robustness implies various constraints on these potentials.
Section~\ref{sec:robustn-mark-kern} discusses the probabilistic behaviour of the whole system, including its inputs,
when the input variables are distributed to some fixed input distribution.  The set of all joint probability
distributions is found such that the system is $\Rcal$-robust for all input states with non-vanishing probability.  

Some of our results in Section~\ref{sec:robustness-and-CI} can also be derived from recent algebraic results
in~\cite{Rauh12:Binomial_edge_ideals} about
generalized binomial ideals.  These ideals generalize the binomial edge ideals of~\cite{HHHKR10:Binomial_Edge_Ideals}
and~\cite{Ohtani11:Ideals_of_some_2-minors}.  Similar ideals have recently been studied in the
paper~\cite{SwansonTaylor11:Minimial_Primes_of_CI_Ideals}, which discusses what we call $(n-1)$-robustness in
Section~\ref{sec:k-robustness}.  In this paper we give self-contained proofs that are also accessible to readers not
acquainted to the language of commutative algebra.  We comment on the relation to the algebraic results in
Remark~\ref{rem:algebra}.

\section{Robustness and canalyzing functions}
\label{sec:robust-canal}

We consider $n$ input nodes, denoted by $1,2,\dots,n$, and one output node, denoted by $0$.  For each $i=0,1,\dots,n$
the state of node $i$ is a discrete random variable $X_{i}$ taking values in the finite set $\Xcal_i$ of cardinality
$d_{i}$.
The input state space is the set $\Xin=\Xcal_{1}\times\dots\times\Xcal_{n}$, and the joint state space is
$\Xtot=\Xcal_{0}\times\Xin$.  For any subset $S\subseteq\{0,\dots,n\}$ write $X_{S}$ for the random vector
$(X_{i})_{i\in S}$; then $X_{S}$ is a random variable with values in $\Xcal_{S}=\bigtimes_{i\in S}\Xcal_{i}$.  For
$S=[n]:=\{1,\dots,n\}$ we also write $\Xvin$ instead of~$X_{[n]}$.  For any $x\in\Xtot$, the \emph{restriction} of $x$
to a subset $S\subseteq\{0,\dots,n\}$ is the vector $x|_{S}\in\Xcal_{S}$ with $(x|_{S})_{i}=x_{i}$ for all $i\in S$.  In
contrast, the notation $x_{S}$ will refer to an arbitrary element of~$\Xcal_{S}$.

As a model for the computation of the output from the input, we use a stochastic map (Markov kernel) $\kappa$ from
$\Xin$ to $\Xcal_0$, that is, $\kappa$ is a function that assigns to each $x\in\Xin$ a probability distribution
$\kappa(x)$ for the output~$X_{0}$.  Such a stochastic map $\kappa$ can be represented by a matrix, with matrix elements
$\kappa(x;x_{0})$, $x\in\Xin, x_{0}\in\Xcal_{0}$,
satisfying $\sum_{x_{0} \in \Xcal_{0}} \kappa(x;x_{0}) = 1$ for all~$x\in\Xin$.
For each $x\in\Xin$ the probability distribution $\kappa(x)$ models the behaviour of $X_{0}$ when the input variables
are in the state~$x$.  When the input is distributed according to some input distribution $\pin$, then the joint
distribution $p$ of input and output variables satisfies
\begin{equation*}
  p(X_{0}=x_{0},\Xvin=x) = \pin(\Xvin=x)\kappa(x;x_{0})\,.
\end{equation*}
If $\pin(\Xvin=x)>0$, then $\kappa(x)$ can be computed from the joint probability distribution $p$
and equals the conditional distribution of~$X_{0}$, given that $\Xvin=x$.

When a subset $S$ of the input nodes is knocked out and only the nodes in $R=[n]\setminus S$ remain, then the behaviour
of the system changes.  Without further assumptions, the post-knockout function is not determined by $\kappa$ and has to
be specified separately.  We model the post-knockout function by a further stochastic map $\kappa_R: \Xcal_{R} \times
\Xcal_0 \to [0,1]$.  A complete specification of the system is given by the family ${(\kappa_A)}_{A \subseteq [n]}$ of
all possible post-knockout functions, which we refer to as \emph{functional modalities}.  As a shorthand notation we
denote functional modalities by~$(\kappa_{A})$.  The stochastic map $\kappa$ itself, which describes the normal behaviour
of the system without knockouts, can be identified with~$\kappa_{[n]}$.

What does it mean for functional modalities to be robust?  Assume that the input is in state $x$, and that we knock out a set
$S$ of inputs.  Denoting the remaining set of inputs by~$R$, we say that $(\kappa_{A})$ is robust in $x$ 
against knockout of $S$, if $\kappa(x) = \kappa_{R}(x|_{R})$, that is, if
\begin{equation} \label{invar}
  \kappa(x; x_0) = \kappa_R(x|_R ; x_0) \qquad \mbox{for all $x_0 \in \Xcal_0$} \, .
\end{equation}
Let $\Rcal$ be a collection of pairs $(R,x_{R})$, where $R\subseteq[n]$ and $x_{R}\in\Xcal_{R}$.  We call such a
collection a \emph{robustness specification} in the following.  
We say that $(\kappa_{A})$ is \emph{$\Rcal$-robust} in a set $\Scal\subseteq\Xin$ if
\begin{equation} \label{invar-gen}
    \kappa(x) = \kappa_R(x|_R), \qquad \text{whenever $x\in\Scal$ and $(R,x|_{R})\in\Rcal$} \, .
\end{equation}
The main example in this section will be the robustness structures
\begin{equation*}
  \Rcal_k := \Big\{ (R,x_{R}) \; : \; R \subseteq [n], |R| \geq k, x_{R}\in\Xcal_{R}\Big\}\,.
\end{equation*}

Equation~\eqref{invar} only compares the functional modality $\kappa_{R}$ after knockout with the stochastic map
$\kappa$ that describes the regular behaviour of the unperturbed system.  In particular, for $R\subsetneq
R'\subsetneq[n]$, the functional modality $\kappa_{R'}$ is in no way restricted by~\eqref{invar}.  Therefore, it may
happen that a system that is not robust against a knockout of a set $S'=[n]\setminus R'$ recovers its regular behaviour
if we knockout even more nodes.  However, this is not the typical situation.  Therefore, it is natural to assume that
the following holds: If $(R,x_{R})\in\Rcal$ and if $R\subsetneq R'\subsetneq[n]$, then $(R',x_{R'})\in\Rcal$ for all
$x_{R'}\in\Xcal_{R'}$ with $x_{R'}|_{R}=x_{R}$.  In this case we call the robustness specification $\Rcal$
\emph{coherent}.  For example the robustness structures $\Rcal_{k}$ are coherent.
The notion of coherence will not play an important role in the following, but it is interesting from a conceptual point
of view.  It is related to the notion of coherency as used e.g.~in~\cite{CabezonWynn12:Algebraic_Reliability_Review}.

By definition, for robust functional modalities $(\kappa_A)$ the largest functional modality $\kappa_{[n]}$ determines
the smaller ones in the relevant points via~\eqref{invar-gen}.  This motivates the following definition: A stochastic
map $\kappa$ is called \emph{$\Rcal$-robust} in $\Scal$, if there exist functional modalities $(\kappa_{A})$ with
$\kappa=\kappa_{[n]}$ that are $\Rcal$-robust in $\Scal$.  More directly, $\kappa$ is $\Rcal$-robust in $\Scal$ if and
only if
\begin{equation*}
    \kappa(x) = \kappa(y), \qquad \text{whenever $x,y\in\Scal$, $x|_{R}=y|_{R}$ and $(R,x|_{R})\in\Rcal$} \, .
\end{equation*}
When studying robustness of a stochastic map $\kappa$ we may always assume that $\Rcal$ is coherent; for if
$x|_{R}=y|_{R}$ implies $\kappa(x)=\kappa(y)$, then $x|_{R'}=y|_{R'}$ also implies $\kappa(x)=\kappa(y)$, whenever
$R\subseteq R'\subseteq [n]$.

For any subset $R\subseteq[n]$ and $x_{R}\in\Xcal_{R}$ let
\[
     \mathcal{C}(R,x_R) \; := \; \big\{ x \in \Xin \; : \; x|_{R} = x_R \big\}.
\]
be the corresponding cylinder set.  Then $\kappa$ is $\Rcal$-robust in $\Scal$ if and only if
$\kappa(x)=\kappa(y)$ for all $x,y\in\Scal\cap\Ccal(R,x_{R})$ and
$(R,x_{R})\in\Rcal$.  In other words, the stochastic map $\kappa$ is constant on $\Scal\cap\Ccal(R,x_{R})$ for all
$(R,x_{R})\in\Rcal$.


The following construction is useful to study robust functional modalities:
Given a robustness specification $\Rcal$, define a graph $G_{\Rcal}$ on $\Xin$ by connecting two elements $x,y\in\Xin$
by an edge if there is $(R,x_R) \in \Rcal$ such that ${x |}_R = {y |}_R = x_R$.  Denote by $G_{\Rcal, \Scal}$ the
subgraph of $G_{\Rcal}$ induced by~$\Scal$.
\begin{ex}
  \label{ex:n-1-robustness-hypercube:graph}
  Assume that $\Xcal_{i}=\{0,1\}$ for $i=1,\dots,n$.  Then the input state space $\Xin=\{0,1\}^{n}$ can be identified
  with the vertices of an $n$-dimensional hypercube.  The graph $G_{\Rcal_{n-1}}$ is the edge graph of this hypercube
  (Fig.~\ref{fig:n-1-robustness-hypercube:graph}a)).  Cylinder sets correspond to faces of this hypercube.  If
  $R\subset[n]$ has cardinality~$n-1$, then the cylinder set $\Ccal(\Rcal,x_{R})$ is an edge, if $R$ has
  cardinality~$n-2$, then $\Ccal(\Rcal,x_{R})$ is a two-dimensional face.
  Fig.~\ref{fig:n-1-robustness-hypercube:graph}b) shows an induced subgraph of $G_{3}$ for $n=4$.  By comparison, the
  graph $G_{\Rcal_{n-2}}$ has additional edges corresponding to diagonals in the quadrangles of~$G_{\Rcal_{n-1}}$.  For
  example, the set of vertices marked black in Figure~\ref{fig:n-1-robustness-hypercube:graph}b) is connected
  in~$G_{\Rcal_{n-2}}$, but not in~$G_{\Rcal_{n-1}}$ (Fig.~\ref{fig:n-1-robustness-hypercube:graph}d)).
\end{ex}
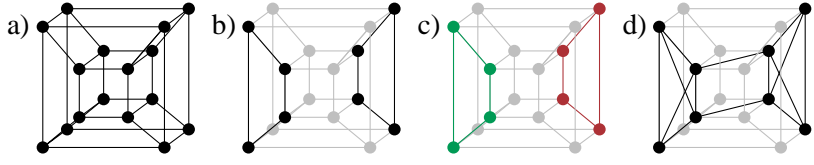
\begin{figure}
  \centering
  \begin{tikzpicture}[scale=0.8]
    \fourcubevert
    \draw (X13) -- (X15) -- (X14);
    \draw (X4) -- (X5);
    \draw (X7) -- (X6);
    \draw (X3) -- (X7);
    \draw (X1) -- (X5);
    \draw (X8) -- (X9);
    \draw (X11) -- (X10) -- (X8);
    \draw (X12) -- (X13);
    \draw (X11) -- (X15);
    \draw (X9) -- (X13);
    \draw (X2) -- (X3) -- (X11);
    \draw (X5) -- (X13);
    \draw (X7) -- (X15);
    \draw (X0) -- (X1);
    \draw (X0) -- (X4) -- (X6) -- (X2) -- (X0) -- (X8) -- (X12) -- (X14) -- (X10);
    \draw (X4) -- (X12);
    \draw (X6) -- (X14);
    \draw (X2) -- (X10);
    \draw (X1) -- (X3) -- (X11) -- (X9) -- cycle;
    \draw (X5) -- (X7);
    \foreach \i in {0,...,15} { \fill (X\i) circle (2.875pt); }
    \node at (-0.4,2) { a) };
  \end{tikzpicture}
  \begin{tikzpicture}[scale=0.8]
    \fourcubevert
    \draw (X5) -- (X7) -- (X15) -- (X13) -- (X5);
    \begin{scope}[lightgray]
      \draw (X15) -- (X14);
      \draw (X4) -- (X5);
      \draw (X7) -- (X6);
      \draw (X3) -- (X7);
      \draw (X1) -- (X5);
      \draw (X8) -- (X9);
      \draw (X11) -- (X10) -- (X8);
      \draw (X12) -- (X13);
      \draw (X11) -- (X15);
      \draw (X9) -- (X13);
      \draw (X2) -- (X3) -- (X11);
      \draw (X0) -- (X1);
      \draw (X0) -- (X4) -- (X6) -- (X2) -- (X0) -- (X8) -- (X12) -- (X14) -- (X10);
      \draw (X4) -- (X12);
      \draw (X6) -- (X14);
      \draw (X2) -- (X10);
      \draw (X1) -- (X3) -- (X11) -- (X9) -- cycle;
      \foreach \i in {1,3,4,6,9,11,12,14} { \fill (X\i) circle (2.875pt); }
    \end{scope}
    \foreach \i in {0,2,5,7,8,10,13,15} { \fill (X\i) circle (2.875pt); }
    \draw (X0) -- (X8) -- (X10) -- (X2) -- (X0);
    \node at (-0.4,2) { b) };
  \end{tikzpicture}
  \begin{tikzpicture}[scale=0.8]
    \fourcubevert
    \begin{scope}[Maroon]
      \draw (X5) -- (X7) -- (X15) -- (X13) -- (X5);
    \end{scope}
    \begin{scope}[lightgray]
      \draw (X15) -- (X14);
      \draw (X4) -- (X5);
      \draw (X7) -- (X6);
      \draw (X3) -- (X7);
      \draw (X1) -- (X5);
      \draw (X8) -- (X9);
      \draw (X11) -- (X10) -- (X8);
      \draw (X12) -- (X13);
      \draw (X11) -- (X15);
      \draw (X9) -- (X13);
      \draw (X2) -- (X3) -- (X11);
      \draw (X0) -- (X1);
      \draw (X0) -- (X4) -- (X6) -- (X2) -- (X0) -- (X8) -- (X12) -- (X14) -- (X10);
      \draw (X4) -- (X12);
      \draw (X6) -- (X14);
      \draw (X2) -- (X10);
      \draw (X1) -- (X3) -- (X11) -- (X9) -- cycle;
      \foreach \i in {1,3,4,6,9,11,12,14} { \fill (X\i) circle (2.875pt); }
    \end{scope}
    \begin{scope}[Maroon]
      \foreach \i in {5,7,13,15} { \fill (X\i) circle (2.875pt); }
    \end{scope}
    \begin{scope}[ForestGreen]
      \draw (X0) -- (X8) -- (X10) -- (X2) -- (X0);
      \foreach \i in {0,2,8,10} { \fill (X\i) circle (2.875pt); }
    \end{scope}
    \node at (-0.4,2) { c) };
  \end{tikzpicture}
  \begin{tikzpicture}[scale=0.8]
    \fourcubevert
    \draw (X5) -- (X7) -- (X15) -- (X13) -- (X5);
    \draw (X8) -- (X13) -- (X7);
    \draw (X10) -- (X15) -- (X5);
    \begin{scope}[lightgray]
      \draw (X15) -- (X14);
      \draw (X4) -- (X5);
      \draw (X7) -- (X6);
      \draw (X3) -- (X7);
      \draw (X1) -- (X5);
      \draw (X8) -- (X9);
      \draw (X11) -- (X10) -- (X8);
      \draw (X12) -- (X13);
      \draw (X11) -- (X15);
      \draw (X9) -- (X13);
      \draw (X2) -- (X3) -- (X11);
      \draw (X0) -- (X1);
      \draw (X0) -- (X4) -- (X6) -- (X2) -- (X0) -- (X8) -- (X12) -- (X14) -- (X10);
      \draw (X4) -- (X12);
      \draw (X6) -- (X14);
      \draw (X2) -- (X10);
      \draw (X1) -- (X3) -- (X11) -- (X9) -- cycle;
      \foreach \i in {1,3,4,6,9,11,12,14} { \fill (X\i) circle (2.875pt); }
    \end{scope}
    \draw (X0) -- (X8) -- (X10) -- (X2) -- (X0) -- (X10);
    \draw (X2) -- (X8);
    \foreach \i in {0,2,5,7,8,10,13,15} { \fill (X\i) circle (2.875pt); }
    \node at (-0.4,2) { d) };
  \end{tikzpicture}
  \caption{An illustration of Example~\ref{ex:n-1-robustness-hypercube:graph} with $n=4$. a) The
    graph~$G_{\Rcal_{3}}$.  b) An induced subgraph~$G_{\Rcal_{3},\Scal}$.  c) The connected components
    of~$G_{\Rcal_{3},\Scal}$.  In fact, in this example both connected components are cylinder sets.  d) The induced
    subgraph~$G_{\Rcal_{2},\Scal}$, which is connected.}
  \label{fig:n-1-robustness-hypercube:graph}
\end{figure}

\begin{prop} \label{verb}
  The following statements are equivalent for a stochastic map $\kappa$: \\
  {\bf (1)} $\kappa$ is $\Rcal$-robust in $\Scal$. \\
  {\bf (2)} $\kappa$ is constant on $\Scal\cap\Ccal(R,x_{R})$ for all $(R,x_{R})\in\Rcal$.\\
  {\bf (3)} $\kappa$ is constant on the connected components of  $G_{\Rcal, \Scal}$. \\
  {\bf (4)} For any probability distribution $\pin$ of $\Xvin$ with $\pin(\Scal)=1$ and for all
  $(R,x_{R})\in\Rcal$, the output $X_0$ is stochastically independent of $X_{[n] \setminus R}$ given $X_R = x_R$.
\end{prop}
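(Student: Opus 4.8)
The plan is to organize the four conditions around statement (2), establishing $(1)\Leftrightarrow(2)$, $(2)\Leftrightarrow(3)$, and $(2)\Leftrightarrow(4)$ separately; I expect only the last of these to demand genuine work.

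The equivalence $(1)\Leftrightarrow(2)$ is already recorded in the discussion preceding the proposition: unfolding the definition of $\Rcal$-robustness of the stochastic map $\kappa$ yields precisely that $\kappa(x)=\kappa(y)$ whenever $x,y\in\Scal\cap\Ccal(R,x_R)$ with $(R,x_R)\in\Rcal$, and this is exactly the assertion that $\kappa$ is constant on each such set. Here I would simply invoke that reformulation.

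For $(2)\Leftrightarrow(3)$ I would use the defining property of $G_{\Rcal,\Scal}$: two vertices $x,y\in\Scal$ are adjacent exactly when they lie in a common set $\Scal\cap\Ccal(R,x_R)$ with $(R,x_R)\in\Rcal$. Assuming (2), the map $\kappa$ then agrees on the two endpoints of every edge, and since constancy along all edges propagates along all paths, $\kappa$ is constant on each connected component, which is (3). Conversely, assuming (3), any two points of $\Scal\cap\Ccal(R,x_R)$ are joined by an edge and hence lie in the same component, so $\kappa$ takes equal values on them, which is (2).

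The substance of the argument, and the step I expect to be the main obstacle, is $(2)\Leftrightarrow(4)$, since it must correctly absorb the quantifier over all input distributions $\pin$ with $\pin(\Scal)=1$ together with the support issues implicit in conditioning. The key tool is the identity noted earlier that, whenever $\pin(\Xvin=x)>0$, the conditional law of $X_0$ given $\Xvin=x$ equals $\kappa(x)$. Writing $S=[n]\setminus R$, the independence $\CI{X_0}{X_S}[X_R=x_R]$ says that the conditional distribution of $X_0$ given $X_R=x_R$ and $X_S=x_S$ is the same for every $x_S$ of positive conditional probability; by the identity this distribution is $\kappa((x_R,x_S))$, so for a fixed $\pin$ statement (4) reduces to the constancy of $\kappa$ on $\{x\in\Ccal(R,x_R):\pin(x)>0\}$. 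For $(2)\Rightarrow(4)$ I would observe that $\pin(\Scal)=1$ confines this set to $\Scal\cap\Ccal(R,x_R)$, where (2) already grants constancy, whereas for values with $\pin(X_R=x_R)=0$ the independence is vacuous. For the converse $(4)\Rightarrow(2)$, given $x,y\in\Scal\cap\Ccal(R,x_R)$ I would apply (4) to the specific distribution $\pin=\tfrac12(\delta_x+\delta_y)$, which satisfies $\pin(\Scal)=1$ and assigns positive mass to both points, so the resulting independence forces $\kappa(x)=\kappa(y)$.
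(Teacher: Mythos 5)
Your proposal is correct and follows essentially the same route as the paper: the same decomposition into $(1)\Leftrightarrow(2)$, $(2)\Leftrightarrow(3)$, and $(2)\Leftrightarrow(4)$, with the same use of the identity $p(X_0=x_0\,|\,\Xvin=x)=\kappa(x;x_0)$. The only (immaterial) difference is that for $(4)\Rightarrow(2)$ you test against the two-point distribution $\tfrac12(\delta_x+\delta_y)$ where the paper uses the uniform distribution on~$\Scal$; both witnesses work equally well.
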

\begin{proof} 
  The equivalence (1) $\Leftrightarrow$ (2) was already shown.
  \\
  (2) $\Leftrightarrow$ (3): Condition (2) says that $\kappa$ is constant along each edge of $G_{\Rcal,\Scal}$.  By
  iteration this implies (3).  In the other direction, the subgraph of $G_{\Rcal,\Scal}$ induced by
  $\Scal\cap\Ccal(R,x_{R})$ is connected for all $(R,x_{R})\in\Rcal$, and therefore (3) implies~(2).
  \\
  (2) $\Rightarrow$ (4):
  For any $x\in\Xin$ with $\pin(x)>0$, the conditional distribution of the output given the input satisfies
  $p(X_{0}=x_{0}\,|\,\Xvin=x) = \kappa(x;x_{0})$.
  By (2), $\kappa(x;x_{0})$ is constant on $\Ccal(\Rcal,x|_{R})\cap\Scal$.  Hence the conditional distribution does not
  depend on $X_{[n]\setminus R}$, and so $p(X_{0}=x_{0}\,|\,\Xvin=x)=p(X_{0}=x_{0}\,|\,X_{R}=x|_{R})$.
  \\
  (4) $\Rightarrow$ (2): Let $\pin$ be the uniform distribution on~$\Scal$ (or any other probability distribution with
  support~$\Scal$), and fix $(\Rcal,x_{R})\in\Rcal$.  By assumption, for any $x\in\Scal$ with $x|_{R}=x_{R}$, the
  conditional distribution $p(X_{0}=x_{0}\,|\,\Xvin=x) = \kappa(x;x_{0})$ does not depend on $x|_{[n]\setminus
    S}$.  Therefore, $\kappa(x)$ is constant on $\Scal\cap\Ccal(\Rcal; x_{R})$.
\end{proof}

The choice of the set $\Scal$ is important: On one hand $\Scal$ should be large, because otherwise the notion of
robustness is very weak.  However, if $\Scal$ is too large, then the equations~\eqref{invar} imply that the output
$X_{0}$ is (unconditionally) independent of all inputs.
Proposition~\ref{verb} gives a hint how to choose the set~$\Scal$: The goal is to have as many connected components as
possible in~$G_{\Rcal,\Scal}$.
This motivates the following definition:
\begin{defi}
  \label{def:robustness-structure}
  For any subset $\Scal\subseteq\Xin$, the set of connected components of $G_{\Rcal,\Scal}$ is called an
  \emph{$\Rcal$-robustness structure}.
\end{defi}

Let $\Bbf$ be an $\Rcal$-robustness structure, and let $\Scal=\cup\Bbf$.  Let $f_{\Bbf}:\Scal\to\Bbf$ be the map that
maps each $x\in\Scal$ to the corresponding block of $\Bbf$ containing~$x$.  Any stochastic map $\kappa$ that is
$\Rcal$-robust on $\Scal$
factorizes through~$f_{\Bbf}$, in the sense that there is a stochastic map $\kappa'$ that maps each block in $\Bbf$ to a
probability distribution on~$\Xcal_{0}$ and that satisfies~$\kappa=\kappa'\circ f_{\Bbf}$.  Conversely, any stochastic
map $\kappa$ that factorizes through $f_{\Bbf}$ is $\Rcal$-robust.

To any joint probability distribution $\pin$ on $\Xin$ with $p(\Xvin\in\Scal)=1$ we can associate a random variable
$B=f_{\Bbf}(X_{1},\dots,X_{n})$.  If $\kappa$ is $\Rcal$-robust on $\Scal$, then $X_{0}$ is independent of
$X_{1},\dots,X_{n}$ given~$B$.
Note that the random variable $B$ is only defined on $\cup\Bbf$, which is a set of measure one with respect to~$\pin$.
The situation is illustrated by the following graph:
\begin{center}
  \begin{tikzpicture}
    \matrix (m) [matrix of math nodes, text height=1.5ex, text depth=0.25ex, column sep=2em, row sep=2em]
    { |(X1)| X_1 & |(X2)| X_2 & |(X3)| X_3 & \cdots & |(X4)| X_n \\
       & & |(B)| \mathllap{f_{\Bbf}(X_{1}},X_{2},\dots\mathrlap{,X_{n})}  & &  \\
       & & |(Y)| Y & &  \\
    }; \foreach \i in {1,2,3,4} { \path[-angle 90] (X\i) edge (B); }
    \path[-angle 90] (B) edge (Y);
  \end{tikzpicture}
\end{center}

When the robustness specification~$\Rcal$ is fixed, how much freedom is left to choose a robust stochastic map~$\kappa$?
More precisely, 
how many components can an $\Rcal$-robustness structure $\Bbf$ have?

\begin{lemma}
  \label{lem:number-bound}
  Let $\Bbf$ be a robustness structure of the robustness specification $\Rcal$.  Let $R\subseteq[n]$, $S=[n]\setminus R$
  and $\Ycal_{R}=\{x_{R}\in\Xcal_{R}:(R,x_{R})\in\Rcal\}$.  Then
  \begin{equation*}
    |\Bbf| \le |\Ycal_{R}| + |\Xcal_{R}\setminus\Ycal_{R}|\cdot|\Xcal_{S}|.
  \end{equation*}
\end{lemma}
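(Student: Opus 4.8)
The plan is to count the blocks of $\Bbf$ by classifying them according to how they meet the fibers of the projection $\pi_R\colon\Xin\to\Xcal_R$, $x\mapsto x|_R$, whose fibers are precisely the cylinder sets $\Ccal(R,x_R)$, $x_R\in\Xcal_R$. Since the blocks of $\Bbf$ are the connected components of $G_{\Rcal,\Scal}$, they are pairwise disjoint and their union is $\Scal$, so it suffices to bound how many of them can occur.

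The first thing I would establish is that these fibers behave very differently over $\Ycal_R$ than over its complement. If $x_R\in\Ycal_R$ then $(R,x_R)\in\Rcal$, so by the definition of $G_\Rcal$ any two points of $\Scal\cap\Ccal(R,x_R)$ are joined by an edge; hence this fiber is a clique in $G_{\Rcal,\Scal}$ and lies entirely inside a single block of $\Bbf$. Over a point $x_R\in\Xcal_R\setminus\Ycal_R$ no edge is forced, and the only available control is that the fiber $\Scal\cap\Ccal(R,x_R)$ has at most $|\Ccal(R,x_R)|=|\Xcal_S|$ elements. I would then split $\Bbf$ into the blocks $B$ with $\pi_R(B)\cap\Ycal_R\neq\emptyset$ and those with $\pi_R(B)\subseteq\Xcal_R\setminus\Ycal_R$; these two groups are clearly disjoint and exhaustive.

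For the first group, the clique observation shows that each $x_R\in\Ycal_R$ with $\Scal\cap\Ccal(R,x_R)\neq\emptyset$ determines a unique block (the one containing that fiber), and conversely every block in this group arises from such an $x_R$. This gives a surjection from a subset of $\Ycal_R$ onto the first group, bounding its size by $|\Ycal_R|$. For the second group, every such block is a nonempty subset of $\bigcup_{x_R\in\Xcal_R\setminus\Ycal_R}\bigl(\Scal\cap\Ccal(R,x_R)\bigr)$, and since the blocks are disjoint their number is at most the cardinality of this union, which is at most $|\Xcal_R\setminus\Ycal_R|\cdot|\Xcal_S|$. Adding the two bounds yields the claim. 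I do not anticipate a real obstacle here; the one point requiring care is making sure no block of the first group is produced by two distinct fibers over $\Ycal_R$ unless those fibers already sit in the same block, which is exactly what the clique observation guarantees. The essential conceptual input is simply the asymmetry between fibers over $\Ycal_R$, which each collapse into a single block, and fibers over the complement, which are controlled only through their size.
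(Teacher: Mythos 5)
Your proof is correct and follows essentially the same route as the paper's: both rest on the observation that each fiber $\Scal\cap\Ccal(R,x_R)$ with $x_R\in\Ycal_R$ is connected in $G_{\Rcal,\Scal}$ (hence absorbed into a single block), while the fibers over $\Xcal_R\setminus\Ycal_R$ can only be bounded pointwise by $|\Xcal_S|$. Your version merely reorganizes the paper's partition of $\Scal$ into a classification of the blocks themselves, and spells out the clique/surjection details that the paper leaves implicit.
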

\begin{proof}
  The set $\Scal$ is the disjoint union of the $|\Ycal_{R}|$ sets $\Ccal(R,x_{R})\cap\Scal$ for $x_{R}\in\Ycal_{R}$ and
  the $|\Xcal_{R}\setminus\Ycal_{R}|\cdot|\Xcal_{S}|$ singletons $\{x\}\subset\Scal$ with $x|_{R}\notin\Ycal_{R}$.  Each
  of these sets induces a connected subgraph of~$G_{\Rcal}$.  The statement now follows from Proposition~\ref{verb}.
\end{proof}

\begin{ex}
  Suppose that $\Scal=\Xin$.  This means that any $\Rcal$-robustness structure $\Bbf$ satisfies $\cup\Bbf = \Xin$.  If
  $G_{R}$ is connected, then $\Bbf$ has just a single block.  In this case the bound of Lemma~\ref{lem:number-bound} is
  usually not tight.  On the other hand, the bound is tight if $\Rcal=\{(R,x_{R}) : x_{R}\in\Xcal_{R}\}$.
%
%
\end{ex}

\begin{rem}[Relation to coding theory]
  \label{rem:coding}
  Assume that all $d_{i}$ are equal.
  We can interpret $\Xin$ as the set of words of length $n$ over the alphabet $[d_{1}]$.
  Consider the uniform case $\Rcal=\Rcal_{k}$.  Then the task is to find a collection of subsets such that any two
  different subsets have Hamming distance at least $n-k+1$.  A related problem appears in coding theory: A code is a
  subset $\Ycal$ of $\Xin$ and corresponds to the case that each element of $\Bbf$ is a singleton.  If distinct elements
  of the code have Hamming distance at least $n-k+1$, then a message can be reliably decoded even if only $k$ letters
  are transmitted.  If all letters are transmitted, but up to $k$ letters may contain an error, then this error may at
  least be detected; hence such codes are called \emph{error detecting codes}. 
  In this setting, the function $f_{\Bbf}$ can be interpreted as the
  decoding operation.  The problem of finding a largest possible code such that all code words have a fixed minimum
  distance is also known as the \emph{sphere packing problem}.  The maximal size $A_{d_{1}}(n,n-k+1)$ of such a code is
  unknown in general.
\end{rem}
  

\section{Canalyzing functions}
\label{sec:canalyzing-functions}

Our notion of $\Rcal$-robust functional modalities naturally generalizes and is motivated by
canalyzing~\cite{Kauffman93:Origins_of_Order} and nested canalyzing
functions~\cite{KauffmanPetersonSamuelssonTroein04:Genetic_networks_with_canalyzing_Boolean_rules}.  Let $f: \Xin \to
\Xcal_0$ be a function, also called (deterministic) map. Such a map can be considered as a special case of a stochastic
map by identifying $f$ with 
  \begin{equation*} 
      \kappa^{f}(x; x_{0}) \; := \; 
      \left\{
         \begin{array}{c@{,\quad}l}
           1 & \mbox{if $f(x) = x_{0}$} \\
           0 & \mbox{otherwise}
         \end{array}
      \right. .
  \end{equation*}
We say that $f$ is \emph{$(R,x_R)$-canalyz\-ing}, if the value
of $f$ does not depend on the input variables $X_{[n] \setminus R}$ given 
that the input variables $X_R$ are in state $x_R$. In other words, 
an $(R,x_R)$-canalyzing function is assumed to be constant
on the corresponding cylinder set:
\begin{equation*} 
     x,x' \in \mathcal{C}(R,x_R) \quad \Rightarrow \quad f(x) = f(x').
\end{equation*}
Given a robustness specification $\Rcal$, we say that a function $f$ is 
$\Rcal$-{\em canalyzing\/} if it is $(R,x_R)$-canalyzing for all 
$(R,x_R) \in \Rcal$. 
Clearly, the set of $\Rcal$-canalyzing functions strongly depends on 
$\Rcal$. On one hand, any function is $\Rcal$-canalyzing with respect to
\[
    \Rcal \; = \; \big\{ ([n], x) \; : \; x \in \Xin \big\}.   
\]  
On the other hand, for two different elements $i, j \in [n]$, and   
\[
   \Rcal = \big( \{i\} \times \Xcal_i \big) \cup \big(  \{j\} \times \Xcal_j \big),
\]
any $\Rcal$-canalyzing function is constant. Note that  
constant functions are $\Rcal$-canalyzing for any $\Rcal$.

The following statement directly follows from Proposition~\ref{verb}:
\begin{prop}
  A function $f:\Xin\to\Xcal_{0}$ is $\Rcal$-canalyzing if and only if $\kappa^{f}$ is $\Rcal$-robust in $\Scal=\Xin$.
\end{prop}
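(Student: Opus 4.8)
The plan is to reduce both sides of the claimed equivalence to the single statement that a suitable function is constant on the cylinder sets indexed by $\Rcal$, and then to invoke the already-established equivalence (1)$\Leftrightarrow$(2) of Proposition~\ref{verb}.

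First I would unwind the two defining notions so that they share a common form. By definition, $f$ is $\Rcal$-canalyzing exactly when, for every $(R,x_{R})\in\Rcal$, the map $f$ is constant on the cylinder set $\Ccal(R,x_{R})$. On the other hand, since here $\Scal=\Xin$, we have $\Scal\cap\Ccal(R,x_{R})=\Ccal(R,x_{R})$, so condition (2) of Proposition~\ref{verb}, applied to the stochastic map $\kappa=\kappa^{f}$, reads: for every $(R,x_{R})\in\Rcal$, the map $\kappa^{f}$ is constant on $\Ccal(R,x_{R})$. Thus both conditions are expressed as constancy on the same family of cylinder sets, the only difference being whether it is $f$ or $\kappa^{f}$ that is required to be constant.

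Second I would establish the bridging observation: for any subset $T\subseteq\Xin$, the function $f$ is constant on $T$ if and only if $\kappa^{f}$ is constant on $T$. For the forward direction, if $f$ takes a single value $c$ on $T$, then by definition $\kappa^{f}(x;\cdot)$ is the point mass at $c$ for every $x\in T$, so $\kappa^{f}$ is constant on $T$. For the converse, note that $\kappa^{f}(x;\cdot)$ is always the Dirac distribution $\delta_{f(x)}$ concentrated at $f(x)$; since the assignment $x_{0}\mapsto\delta_{x_{0}}$ is injective, equality of the distributions $\kappa^{f}(x;\cdot)$ across $x\in T$ forces equality of the values $f(x)$, that is, $f$ is constant on $T$.

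Finally I would assemble the pieces. Applying the bridging observation with $T=\Ccal(R,x_{R})$ for each $(R,x_{R})\in\Rcal$ shows that $f$ is $\Rcal$-canalyzing if and only if $\kappa^{f}$ satisfies condition (2) of Proposition~\ref{verb} with $\Scal=\Xin$. By the equivalence (1)$\Leftrightarrow$(2) of that proposition, the latter holds if and only if $\kappa^{f}$ is $\Rcal$-robust in $\Xin$, which is the desired conclusion. The only non-formal point in the whole argument is the converse half of the bridging observation, namely the injectivity of $x_{0}\mapsto\delta_{x_{0}}$; everything else is a direct translation of definitions.
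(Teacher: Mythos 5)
Your proof is correct and takes the same route the paper intends: the paper gives no explicit argument, stating only that the proposition ``directly follows from Proposition~\ref{verb},'' and your write-up simply fills in the details of exactly that reduction, via condition (2) of Proposition~\ref{verb} together with the (correct) observation that $f$ is constant on a set if and only if $\kappa^{f}$ is, since $x_{0}\mapsto\delta_{x_{0}}$ is injective.
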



Particular cases of canalyzing functions have been studied in the context of robustness:

\begin{ex}
 \emph{(1) Canalyzing functions.}   A function $f$ with domain $\Xin$ is \emph{canalyzing} 
 in the sense of~\cite{Kauffman93:Origins_of_Order},
 if there exist an input node $k\in[n]$, an input value $a \in\Xcal_{k}$, and an output value 
  $b \in \Xcal_0$ such that the value of $f$ is independent of 
  $x_{[n]\setminus\{k\}}$, given that $x|_{k}=a$.  In other words, $f(x)=f(y)= b$ whenever 
  $x|_{k}=y|_{k}=a$. A canalyzing function is $\Rcal$-canalyzing with
  \[
      \Rcal := \big\{ (R,x_{R}) \; : \; R \subseteq [n], \; k \in R, \; x_{R} \in
      \Xcal_R,\; {x_{R}|}_k = a \big\}\,.
  \]  
  
  \emph{(2) Nested canalyzing functions} 
  have been studied in~\cite{KauffmanPetersonSamuelssonTroein04:Genetic_networks_with_canalyzing_Boolean_rules}.  A
  function $f$ is \emph{nested canalyzing} in the variable order $X_{1},\dots,X_{n}$ with canalyzing input values
  $a_{1}\in\Xcal_{1}$, \dots, $a_{n}\in\Xcal_{n}$ and canalyzed output values $b_{1},\dots,b_{n}$ if $f$ satisfies
  $f(x)=b_{k}$ for all $x\in\Xcal$ satisfying $x|_{k}=a_{k}$ and $x|_{i}\neq a_{i}$ for all $i<k$.  Let $\Rcal :=
  \biguplus_{k = 1}^n \Rcal^{(k)}$, where
  \begin{equation*}
     \Rcal^{(k)} \; := \; 
         \Big\{
           (R,x_{R}) \; : \; [k] \subseteq R,\; x_{R}|_{1} \neq a_1, \dots, x_{R}|_{k-1}
           \neq a_{k-1}, x_{R}|_k = a_k  
         \Big\},
         \,.
  \end{equation*}
  It is easy to see that $f$ is a nested canalyzing function if and only if it is $\Rcal$-canalyzing.

  The set of Boolean nested canalyzing functions has been described algebraically
  in~\cite{JarrahLaubenbacher07:Toric_Variety_of_Nested_Canalyzing_Functions} as a variety over the finite field
  $\Fb_{2}$.  Here, we use a different viewpoint, which allows to study not only deterministic functions, but also
  stochastic functions. 
\end{ex}

\section{Robustness and Gibbs representation}
\label{sec:robustn-mark-kern}

Let $(\kappa_A)$ be a collection of functional modalities, as defined in Section~\ref{sec:robust-canal}.  Instead of
providing a list of all functional modes $\kappa_A$, one can describe them in more mechanistic terms.  To illustrate
this, we first consider an example from the field of neural networks: Assume that the output node receives an input $x =
(x_1,\dots,x_n) \in \{-1,+1\}^n$ and generates the output 
$+1$ with probability
\begin{equation*} 
    \kappa(x_1,\dots,x_n; +1) \; := \; \frac{1}{1 + e^{-(\sum_{i = 1}^n w_i \, x_i - \eta)}} \, .
\end{equation*}
For an arbitrary output $x_0$ this implies
\begin{equation} \label{neuron2} 
  \kappa(x_1,\dots,x_n; x_0) \; := \;
  \frac{ e^{\frac{1}{2}(\sum_{i = 1}^n w_i \, x_i \, - \eta) x_0 }}{e^{\frac{1}{2}(\sum_{i = 1}^n w_i \, x_i  - \eta)\cdot (-1)} + e^{\frac{1}{2} (\sum_{i = 1}^n w_i \, x_i  - \eta)\cdot ( + 1 )}} \, .
\end{equation}
The structure of this representation of the stochastic map $\kappa$ already suggests what the function should be after a
knockout of a set $S$ of input nodes: Simply remove the contribution of all the nodes in~$S$.  The post-knockout
function is then given by
\begin{equation}\label{eq:neuron-funcmod}
     \kappa_R(x_R; +1) \; := \; \frac{e^{\frac{1}{2}(\sum_{i \in R} w_i \, x_i - \eta)\,x_{0}}}{e^{+\frac{1}{2}(\sum_{i \in R} w_i \, x_i - \eta)} + e^{-\frac{1}{2}(\sum_{i \in R} w_i \, x_i - \eta)}},
\end{equation}
where $R=[n]\setminus S$.  
These post-knockout functional modalities are based on the decomposition of the sum that appears in~\eqref{neuron2}.

More generally, we consider the following model of $(\kappa_A)$:
\begin{eqnarray} \label{intpot}
    \kappa_A(x_A; x_0) \; = \; \frac{e^{\sum_{B \subseteq A} \phi_B(x_A|_{B},x_0)}}{\sum_{x_0'} e^{ \sum_{B \subseteq A} \phi_B(x_A|_{B},x_0')}},
\end{eqnarray}
where the $\phi_{B}$ are functions on $\Xcal_{B} \times \Xcal_0$.  Such a sum decomposition of $\kappa$ is
referred to as a \emph{Gibbs representation} of $\kappa$ and contains more information than $\kappa$ itself.  Clearly,
each $\kappa_A$ is strictly positive.  Using the M\"obius inversion, it is easy to see that each strictly positive
family $(\kappa_A)$ has a representation of the form~\eqref{intpot} with
\begin{equation}
  \label{eq:Moebiuspotentials}
  \phi_A (x_A, x_0) \; := \; \sum_{C \subseteq A} (-1)^{| A \setminus C |} \ln \kappa_C(x_A|_{C} ; x_0 ) \, .
\end{equation}
Note that this representation is not unique: If an arbitrary function of $x_{A}$ (that does not depend on $x_{0}$) is
added to the function $\phi_{A}$, then the function~$\kappa_{A}$, defined via~\eqref{intpot}, does not change.

A single robustness constraint has the following consequences for the $\phi_{A}$.
\begin{prop}
  \label{prop:robkernelcond}
  Let $S\subseteq[n]$ and $R=[n]\setminus S$, and let $(\kappa_A)$ be strictly positive functional modalities with
  Gibbs potentials $(\phi_{A})$.  Then $(\kappa_A)$ is robust in $x$ against knockout of $S$ if and only if
  $\sum_{\substack{B\subseteq[n], B\not\subseteq R}}\phi_{B}(x|_{B},x_{0})$ does not depend on $x_{0}$.
\end{prop}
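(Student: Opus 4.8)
The plan is to substitute the Gibbs representation~\eqref{intpot} directly into the single robustness condition~\eqref{invar} and to isolate exactly those terms indexed by subsets $B$ that fail to be contained in $R$. Writing $\kappa=\kappa_{[n]}$, robustness in $x$ against knockout of $S=[n]\setminus R$ is the requirement that $\kappa_{[n]}(x;x_{0})=\kappa_{R}(x|_{R};x_{0})$ for all $x_{0}\in\Xcal_{0}$. First I would record the elementary restriction identity $(x|_{R})|_{B}=x|_{B}$ for every $B\subseteq R$, so that by~\eqref{intpot} the exponent appearing in $\kappa_{R}(x|_{R};x_{0})$ is precisely $\sum_{B\subseteq R}\phi_{B}(x|_{B},x_{0})$. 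I would then split the full exponent in the numerator of $\kappa_{[n]}$ according to whether or not $B\subseteq R$, setting
\[
  \psi(x_{0}):=\sum_{B\subseteq R}\phi_{B}(x|_{B},x_{0}),\qquad
  \rho(x_{0}):=\sum_{\substack{B\subseteq[n]\\ B\not\subseteq R}}\phi_{B}(x|_{B},x_{0}),
\]
so that the exponent in $\kappa_{[n]}(x;x_{0})$ equals $\psi(x_{0})+\rho(x_{0})$ while that in $\kappa_{R}(x|_{R};x_{0})$ equals $\psi(x_{0})$. The assertion to be proved is then that robustness is equivalent to $\rho$ being independent of $x_{0}$.

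For the implication ``$\Leftarrow$'', suppose $\rho(x_{0})\equiv c$ is constant. Then in
\[
  \kappa_{[n]}(x;x_{0})=\frac{e^{\psi(x_{0})+c}}{\sum_{x_{0}'}e^{\psi(x_{0}')+c}}
\]
the common factor $e^{c}$ cancels between numerator and denominator, leaving exactly $\kappa_{R}(x|_{R};x_{0})$; hence~\eqref{invar} holds.

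For ``$\Rightarrow$'', I would take the equality $\kappa_{[n]}(x;x_{0})=\kappa_{R}(x|_{R};x_{0})$ of the two fractions, cross-multiply, and cancel the common strictly positive factor $e^{\psi(x_{0})}$ to obtain
\[
  e^{\rho(x_{0})}=\frac{\sum_{x_{0}'}e^{\psi(x_{0}')+\rho(x_{0}')}}{\sum_{x_{0}'}e^{\psi(x_{0}')}}.
\]
The right-hand side does not involve $x_{0}$, so $e^{\rho(x_{0})}$, and therefore $\rho(x_{0})$, is constant in $x_{0}$, as required.

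The computation is entirely elementary, so I do not expect a genuine obstacle beyond careful bookkeeping; the one point that deserves attention is the forward direction, where one must cancel $e^{\psi(x_{0})}$ and recognize the resulting right-hand side as a ratio of two $x_{0}$-independent partition sums. Strict positivity of the functional modalities is exactly what guarantees that no division by zero occurs and that passing from ``$e^{\rho}$ constant'' to ``$\rho$ constant'' is legitimate. I note in passing that the non-uniqueness of the Gibbs representation is harmless here: any admissible modification of the $\phi_{A}$ changes them only by functions independent of $x_{0}$, which alters $\rho(x_{0})$ by an $x_{0}$-independent amount and hence does not affect whether $\rho$ depends on $x_{0}$.
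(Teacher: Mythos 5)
Your proof is correct, and it takes a mildly but genuinely different route from the paper's. The paper first passes to the canonical potentials $\tilde\phi_{A}$ obtained by M\"obius inversion~\eqref{eq:Moebiuspotentials}; for these the exponent in~\eqref{intpot} reconstructs $\ln\kappa_{A}$ exactly with trivial normalization, so~\eqref{invar} becomes literally $\sum_{B\not\subseteq R}\tilde\phi_{B}(x|_{B},x_{0})=0$, and the statement for an arbitrary Gibbs representation is then obtained by invoking the fact that $\phi_{B}-\tilde\phi_{B}$ is independent of $x_{0}$. You instead work directly with the given potentials and absorb the normalization constants by hand, via the cancellation of $e^{c}$ in one direction and the cross-multiplication step in the other; this makes your argument self-contained, and in particular you never need the gauge-invariance fact that the paper asserts without detailed proof (your closing remark correctly records it, but your proof does not rest on it, which is a small advantage). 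What the paper's normalization buys in exchange is the sharper intermediate form of the condition --- the sum of the canonical potentials over $B\not\subseteq R$ actually \emph{vanishes}, not merely is constant in $x_{0}$ --- which is the form used immediately afterwards in~\eqref{eq:ex3:potcond}; note that for the canonical potentials the two formulations coincide anyway, since an $x_{0}$-independent discrepancy $c$ between two normalized distributions forces $e^{c}=1$ upon summing over $x_{0}$. Both arguments are complete and elementary.
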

\begin{proof}
  Denote by $\tilde\phi_{A}$ the potentials defined via~\eqref{eq:Moebiuspotentials}.  Then~\eqref{invar} is equivalent to
  \begin{equation*}
    \sum_{B\subseteq[n]}\tilde\phi_{B}(x|_{B},x_{0})
    =
    \sum_{B\subseteq R}\tilde\phi_{B}(x|_{B},x_{0})
    \quad\Longleftrightarrow\quad
    \sum_{\substack{B\subseteq[n]\\B\not\subseteq R}}\tilde\phi_{B}(x|_{B},x_{0})
    = 0.
  \end{equation*}
  The statement follows from the fact that $\phi_{B}(x|_{B};x_{0}) - \tilde\phi_{B}(x|_{B};x_{0})$ is independent
  of~$x_{0}$ (for fixed $x$).
\end{proof}
\begin{ex}
  Consider $n=2$ binary inputs, $\Xcal_{1}=\Xcal_{2}=\{0,1\}$, and let $\Scal=\{(0,0),(1,1)\}$.  Then $1$-robustness
  on $\Scal$ means
  \begin{equation*}
    \kappa_{\{1\}}(x_{1}; x_{0}) = \kappa_{\{1,2\}}(x_{1},x_{2}; x_{0}) = \kappa_{\{2\}}(x_{2}; x_{0})
  \end{equation*}
  for all $x_{0}$ whenever $x_{1}=x_{2}$.  By Proposition~\ref{prop:robkernelcond} this translates into the conditions
  \begin{equation}
    \label{eq:ex3:potcond}
    \phi_{\{1,2\}}(x_{1},x_{2}; x_{0}) + \phi_{\{1\}}(x_{1}; x_{0})
    = 0 =
    \phi_{\{1,2\}}(x_{1},x_{2}; x_{0}) + \phi_{\{2\}}(x_{2}; x_{0})
  \end{equation}
  for all $x_{0}$ whenever $x_{1}=x_{2}$
  for the potentials $(\phi_{A})$ defined via~\eqref{eq:Moebiuspotentials}.  This means: Assuming that
  $(\kappa_{A})$ is $1$-robust, it suffices to specify the four functions
  \begin{align*}
    \phi_{\emptyset}(x_{0}),   \;
    \phi_{\{1\}}(x_{1}; x_{0}),\;
    \phi_{\{1,2\}}(0,1; x_{0}),\;
    \phi_{\{1,2\}}(1,0; x_{0}).
  \end{align*}
  The remaining potentials can be deduced from~\eqref{eq:ex3:potcond}.  If only the values of $(\kappa_{A})$ for $x\in\Scal$ are needed, then it suffices to specify
$    \phi_{\emptyset}(x_{0})
    \text{ and }
    \phi_{\{1\}}(x_{1}; x_{0})$.
\end{ex}

Does $\Rcal$-robustness in $x$ imply any structural constraints on $(\kappa_A)$?
%
%
If $(\kappa_{A})$ is $\Rcal$-robust in $x$ for all $x$ belonging to a set $\Scal$, then the corresponding conditions imposed by
Proposition~\ref{prop:robkernelcond} depend on $\Scal$.  In this section, we are interested in conditions that are
independent of $\Scal$.  Such conditions allow to define sets of functional modalities that contain all $\Rcal$-robust
functional modalities for all possible sets $\Scal$.  If $\Scal$ (which will be the support of the input distribution in
Section~\ref{sec:robustness-and-CI}) is unknown from the beginning, then the system can choose its policy within such a
restricted set of functional modalities.  To find results that are independent of $\Scal$, our trick is to find a set
$\tilde M_{\Rcal}$ of functional modalities 
such that $(\kappa_{A})$ 
can be approximated on $\Scal$ by functional modalities in~$\tilde M_{\Rcal}$.  The approximation will be independent
of~$\Scal$. 

We first consider the special case $\Rcal=\Rcal_k := \{ (R,x_{R}) \,:\, R\subseteq [n], |R|\geq k, x_{R}\in\Xcal_{R}\}$.
For simplicity, we replace any prefix or subscript $\Rcal_{k}$ by $k$.
Denote by $M_{k+1}$ the set of all functional modalities $(\kappa_{A})$ such that there exist potentials $\phi_{A}$ of the
form
\begin{equation*}
  \phi_{A}(x_{A};x_{0}) = \sum_{\substack{B\subseteq A \\ |B|< k+1}}\alpha_{A,B}\Psi_{B}(x_{A}|_{B};x_{0}),
\end{equation*}
where $\alpha_{A,B}\in\Rb$ and $\Psi_{B}$ is an arbitrary function $\Rb^{\Xcal_{B}\times\Xcal_{0}}\to\Rb$.
The set $M_{k+1}$ is called the \emph{family of $(k+1)$-interaction functional modalities}.  Note that the functions
$\Psi_{B}$ do not depend on~$A$.  This ensures a certain interdependence among the functional modalities~$\kappa_{A}$.  The
name ``$(k+1)$-interaction'' comes from the fact that each potential $\Psi_{B}$ depends on the $k$ (or less) variables
in $B$ plus the output variable~$X_{0}$.  Since $M_{k+1}$ only contains strictly positive functional modalities, we are also
interested in the \emph{closure} of $M_{k+1}$ with respect to the usual topology on the space of matrices, considered
as elements of a finite-dimensional real vector space.
\begin{ex}
  \label{ex:linear-threshold}
  The functional modalities~\eqref{eq:neuron-funcmod}, derived from the classical model~\eqref{neuron2} of a neural
  network, belong to~$M_{2}$.  To illustrate the difference between $M_{2}$ and its closure, consider the functional
  modalities $(\kappa_{A})$ with
  \begin{equation*}
    \kappa_{A}(x_1,\dots,x_n; x_0) \; := \; \frac{ e^{\frac{\beta}{2}(\sum_{i\in A} w_i \, x_i\,-\,\eta)\,x_0}}{e^{-\frac{\beta}{2}(\sum_{i\in A} w_i \, x_i\,-\,\eta)}
      + e^{+\frac{\beta}{2}(\sum_{i\in A}^n w_i \, x_i\,-\,\eta)}} \, .
  \end{equation*}
  If $w_{1},\dots,w_{n}$ and $\eta$ are fixed and $\beta\to\infty$, then
  \begin{equation}
    \label{ex:signum-modalities}
    \kappa_{A}(x_1,\dots,x_n; +1)
    \to \theta(\sum_{i\in A} w_{i}x_{i} - \eta),
  \end{equation}
  where
  \begin{equation*}
    \theta(x) =
    \begin{cases}
      +1, &\text{ if } x > 0,\\
      \frac12, &\text{ if } x = 0,\\
      0, &\text{ if } x < 0.
    \end{cases}
  \end{equation*}
  The functional modalities~\eqref{ex:signum-modalities} are deterministic limits of the probabilistic
  model~\eqref{neuron2}, called \emph{linear threshold functions}.  They lie in the closure
  of~$M_{2}$, but not in $M_{2}$ itself.

  Linear threshold functions are widely used as elementary building blocks in network dynamics, for example to build
  simple models of neural networks, metabolic networks or gene-regulation networks.  Robustness against knockouts of
  such networks has been studied in~\cite{BBROK10:Robustness_of_Boolean_Dynamics}, exploring the example of the yeast
  cell cycle.
\end{ex}

Let $\tilde M_{k+1}$ be the set of strictly positive functional modalities $(\kappa_{A})$ such that
\begin{multline}
  \label{eq:geom-mean-k}
  \kappa_{C}(x_{C};x_{0})
  = \frac{1}{Z_{C,x_{C}}} \exp\left(\sum_{\substack{B\subseteq C\\|B|=k}}\frac{1}{\binom{|C|}{k}}\ln(\kappa_{B}(x_{C}|_{B};x_{0}))\right)
  \\
  = \frac{1}{Z_{C,x_{C}}} \left(\prod_{\substack{B\subseteq C\\|B|=k}}\kappa_{B}(x_{C}|_{B};x_{0})\right)^{1/{\binom{|C|}{k}}}
\end{multline}
for all $C\subseteq[n]$ with~$|C|>k$, where $Z_{C,x_{C}}$ is a normalization constant that ensures that $\kappa_{C}(x_{C})$ is
a probability distribution.
%
Note that equations~\eqref{eq:geom-mean-k} can be used to parametrize the set $\tilde M_{k+1}$: The stochastic
maps $\kappa_{A}$ with $|A|\le k$ can be chosen arbitrarily, while all other stochastic maps $\kappa_{C}$ with $|C|>k$
can be computed by normalizing the geometric mean of the stochastic maps $\kappa_{B}$ for $B\subseteq C$ and~$|B|=k$.

\begin{lemma}
  \label{lem:Ktilde-und-K-k+1}
  $\tilde M_{k+1}$ is a subset of~$M_{k+1}$.  It consists of those functional modalities $(\kappa_{A})$ where the
  coefficients $\alpha_{A,B}$ additionally satisfy
  \begin{equation*}
    (-1)^{|A|}\alpha_{A,B} = (-1)^{|A'|}\alpha_{A',B},\qquad\text{ whenever $B\subseteq A\cap A'$ and $|B|< k$},
  \end{equation*}
  and
  \begin{equation*}
    (-1)^{A'}|A'|\alpha_{A,B} = (-1)^{A}|A|\alpha_{A',B},
    \qquad\text{ whenever $B\subseteq A\cap A'$ and $|B| = k$}.
  \end{equation*}
  for all $x_{B}\in\Xcal_{B}$ and $x_{0}\in\Xcal_{0}$.
\end{lemma}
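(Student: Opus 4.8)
The plan is to pass to the M\"obius potentials $\tilde\phi_A$ defined in~\eqref{eq:Moebiuspotentials}, and to use the remark following that formula that $\kappa_A$ is unchanged when any potential is altered by a function of $x_A$ not depending on~$x_0$. Since all modalities in sight are strictly positive, all logarithms are well defined; throughout I will write $\equiv$ for equality modulo $x_0$-independent terms. The whole statement then reduces to evaluating two alternating binomial sums, so I will first compute the coefficients $\alpha_{A,B}$ explicitly and then read off the relations in both directions.

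For the inclusion $\tilde M_{k+1}\subseteq M_{k+1}$ together with the ``only if'' direction, I will take $(\kappa_A)\in\tilde M_{k+1}$ and put $\Psi_B:=\ln\kappa_B$ for all $B$ with $|B|\le k$. Taking logarithms in~\eqref{eq:geom-mean-k} gives $\ln\kappa_C\equiv\frac{1}{\binom{|C|}{k}}\sum_{B\subseteq C,\,|B|=k}\ln\kappa_B$ for every $|C|>k$, the factor $Z_{C,x_C}$ being $x_0$-independent. Substituting this into $\tilde\phi_A=\sum_{C\subseteq A}(-1)^{|A\setminus C|}\ln\kappa_C$ and collecting the coefficient of each $\Psi_B$, I find for $|B|<k$ that only $C=B$ contributes, so $\alpha_{A,B}=(-1)^{|A|-|B|}$, while for $|B|=k$ the coefficient equals $\sum_{B\subseteq C\subseteq A}(-1)^{|A\setminus C|}/\binom{|C|}{k}$ and hence depends only on~$|A|$. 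Using the Euler integral $1/\binom{k+j}{k}=k\int_0^1 t^{k-1}(1-t)^j\,dt$, this last sum evaluates to $(-1)^{|A|-k}k/|A|$. Both relations then follow by inspection: $(-1)^{|A|}\alpha_{A,B}=(-1)^{|B|}$ is independent of $A$ when $|B|<k$, and $(-1)^{|A|}|A|\,\alpha_{A,B}=(-1)^{k}k$ is independent of $A$ when $|B|=k$.

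For the converse I will start from an arbitrary representation $\phi_A=\sum_{B\subseteq A,\,|B|\le k}\alpha_{A,B}\Psi_B$ whose coefficients satisfy the two relations, and recover~\eqref{eq:geom-mean-k}. From~\eqref{intpot} one has $\ln\kappa_C\equiv\sum_{A\subseteq C}\phi_A\equiv\sum_{B}\beta_{C,B}\Psi_B$, where $\beta_{C,B}:=\sum_{B\subseteq A\subseteq C}\alpha_{A,B}$; it therefore suffices to verify the coefficientwise identity $\beta_{C,B}=\frac{1}{\binom{|C|}{k}}\sum_{B\subseteq B',\,|B'|=k}\beta_{B',B}$ for each $B$ and each $|C|>k$, as this is a sufficient condition for the desired geometric-mean equality to hold regardless of any linear dependencies among the $\Psi_B$. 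The relations say precisely that $(-1)^{|A|}\alpha_{A,B}$ is constant in $A$ for $|B|<k$ and $(-1)^{|A|}|A|\,\alpha_{A,B}$ is constant in $A$ for $|B|=k$. In the first case the sum of $(-1)^{|A|}$ over the interval $B\subseteq A\subseteq C$ vanishes because $|C|>|B|$, so both sides are zero; in the second case only $B'=B$ survives on the right, and the identity becomes the companion evaluation $\sum_{j=0}^{|C|-k}\binom{|C|-k}{j}(-1)^j/(k+j)=1/(k\binom{|C|}{k})$, again an instance of the Beta integral. Since both sides of~\eqref{eq:geom-mean-k} are normalized distributions in $x_0$, equality of their logarithms modulo $x_0$-independent terms forces genuine equality, whence $(\kappa_A)\in\tilde M_{k+1}$.

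The only real obstacle is bookkeeping: evaluating the two alternating binomial sums and keeping the $(-1)^{|A|}$ and $|A|$ factors straight. Both sums are the same Beta integral in disguise, so I expect no difficulty beyond care with signs and with the placement of the factor $|A|$, which in the second relation is exactly what cancels the $1/|A|$ in the closed form $\alpha_{A,B}=(-1)^{|A|-k}k/|A|$.
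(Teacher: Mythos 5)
Your proof is correct and takes essentially the same route as the paper: both pass to the M\"obius potentials, normalize modulo $x_0$-independent terms, derive the closed forms $\alpha_{A,B}=(-1)^{|A|-|B|}$ for $|B|<k$ and $(-1)^{|A|-k}k/|A|$ for $|B|=k$, and reduce everything to the same alternating binomial sum (your Beta-integral evaluation is exactly the identity $\sum_{i}\binom{r}{i}\frac{(-1)^{i}}{m+i}=1/\bigl((m+r)\binom{r+m-1}{m-1}\bigr)$ cited in the paper). The only cosmetic difference is that you obtain the coefficients by M\"obius-inverting the geometric-mean relation and then verify the converse coefficientwise, whereas the paper runs the single summation calculation in the other direction for both implications.
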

\begin{proof}
  Assume that the coefficients $\alpha_{A,B}$ of $(\kappa_{A})\in M_{k+1}$ satisfy the conditions stated in the lemma.
  We may multiply all functions $\Psi_{B}$ by scalars and assume
  \begin{align}
    \label{eq:Ktilde-alphas}
    \alpha_{A,B}=(-1)^{|A|-|B|},&\quad\text{ if }|B|<k,& \alpha_{A,B}=(-1)^{|A|-k}\frac{k}{|A|},&\quad\text{ if }|B|=k.
  \end{align}
  Then $\ln(\kappa_{C}(x_{C};x_{0}))$ equals the logarithm of the normalization constant plus
  \begin{align}
    \sum_{A\subseteq C}&\left( \sum_{\substack{B\subseteq A \\ |B| < k}}
      (-1)^{|A|-|B|} \Psi_{B}(x_{C}|_{B}; x_{0}) + \sum_{\substack{B\subseteq A \\ |B| = k}} (-1)^{|A|-k}\frac{k}{|A|}
      \Psi_{B}(x_{C}|_{B}; x_{0}) \right) \notag \\
    \displaybreak[0]
    & = \sum_{\substack{B\subseteq C\\ |B|< k}} \left(\sum_{R\subseteq C\setminus B}(-1)^{|R|}\right)
    \Psi_{B}(x_{C}|_{B}; x_{0})
    \notag\\ & \qquad\qquad\qquad
    + \sum_{\substack{B\subseteq C\\ |B|=k}}\left(\sum_{R\subseteq C\setminus B}(-1)^{|R|}\frac{k}{|R|+k}\right) \Psi_{B}(x_{C}|_{B}; x_{0}) \notag\\
    \displaybreak[0]
    & = \sum_{\substack{B\subseteq C\\ |B|< k}} \left(\sum_{l=0}^{|C|-|B|}(-1)^{l}\binom{|C|-|B|}{l}\right)
    \Psi_{B}(x_{C}|_{B}; x_{0})
    \notag\\ & \qquad\qquad\qquad
    + \sum_{\substack{B\subseteq C\\ |B|=k}}\left(\sum_{l=0}^{|C|-k}(-1)^{l}\binom{|C|-k}{l}\frac{k}{l+k}\right) \Psi_{B}(x_{C}|_{B}; x_{0})
    \displaybreak[0]
    \notag\\ & = \sum_{\substack{B\subseteq C\\ |B|< k}} \delta_{|C|,|B|} \Psi_{B}(x_{C}|_{B}; x_{0})
    + \sum_{\substack{B\subseteq C\\ |B| = k}} \frac{1}{\binom{|C|}{k}} \Psi_{B}(x_{C}|_{B}; x_{0})\,,
    \label{eq:Ktilde-calculation}
  \end{align}
  where the identity $\sum_{i=0}^{r}\binom{r}{i}\frac{(-1)^{i}}{m+i}=1/\left((m+r)\binom{r+m-1}{m-1}\right)$ was used
  and $\delta_{a,b}$ denotes Kronecker's delta.  For $|C|>k$ the first sum is empty, and it follows that
  $\kappa_{C}$ satisfies the defining equality of~$\tilde M_{k+1}$.

  Conversely, if $(\kappa_{A})\in\tilde M_{k+1}$, then let $\alpha_{A,B}$ be as in~\eqref{eq:Ktilde-alphas}, and let
  \begin{equation*}
    \Psi_{B}(x_{B}; x_{0}) = \log(\kappa_{B}(x_{B}; x_{0})), \quad\text{ for all }x_{0}\in\Xcal_{0}, x_{B}\in\Xcal_{B}, |B|\le k\,.
  \end{equation*}
  These coefficients $\alpha_{A,B}$ and functions $\Psi_{B}$ together define an element $(\tilde\kappa_{A})\in M_{k+1}$.
  The calculation~\eqref{eq:Ktilde-calculation} shows that
  \begin{equation*}
    \tilde\kappa_{A}(x_{A}; x_{0}) =
    \begin{cases}
      \,\frac{1}{Z_{A,x_{A}}}\exp(\Psi_{A}(x_{A}; x_{0}) = \kappa_{A}(x_{A}; x_{0}), & \quad\text{ if }|A|\le k,\\
      \,\frac{1}{Z_{A,x_{A}}}\exp\left(\sum_{\substack{B\subseteq
            A\\|B|=k}}\frac{1}{\binom{|A|}{k}}\ln(\kappa_{B}(x_{A}|_{B};x_{0})\right), & \quad\text{ if }|A| > k,\\
    \end{cases}
  \end{equation*}
  and so~$(\kappa_{A})=(\tilde\kappa_{A})$ belongs to~$M_{k+1}$ and is of the desired form.
\end{proof}

\begin{thm}
  \label{thm:rob-kernel-k-interactions}
  Let $(\kappa_A)$ be functional modalities.  Then there exist functional
  modalities $(\tilde\kappa_{A})$ in the closure of $\tilde M_{k+1}$ such that the following holds: If $(\kappa_{A})$ is
  $k$-robust on a set $\Scal\subseteq\Xin$, then $\kappa_{A}(x|_{A}) = \tilde\kappa_{A}(x|_{A})$ for all $A\subseteq[n]$
  and all $x\in\Scal$.  In particular, $(\tilde\kappa_{A})$ belongs to the closure of the family of
  $(k+1)$-interactions.
\end{thm}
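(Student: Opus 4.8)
The plan is to build $(\tilde\kappa_A)$ explicitly from $(\kappa_A)$ using the parametrization of $\tilde M_{k+1}$ described after~\eqref{eq:geom-mean-k}, in a way that does not refer to $\Scal$ at all, and to check the agreement on $\Scal$ only afterwards. Concretely, I would set $\tilde\kappa_A := \kappa_A$ for all $A$ with $|A|\le k$, and for $|C|>k$ define
\[
  \tilde\kappa_C(x_C; x_0) := \frac{1}{Z_{C,x_C}}\Big(\prod_{\substack{B\subseteq C\\ |B|=k}}\kappa_B(x_C|_B; x_0)\Big)^{1/\binom{|C|}{k}}.
\]
When $(\kappa_A)$ is strictly positive, all factors are positive, every $Z_{C,x_C}$ is positive, and $(\tilde\kappa_A)$ lies in $\tilde M_{k+1}$ by definition, hence in its closure.

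The heart of the argument is that $k$-robustness makes this geometric mean collapse. Fix $x\in\Scal$ and $A\subseteq[n]$. If $|A|\le k$ there is nothing to prove, since $\tilde\kappa_A=\kappa_A$ by construction. If $|A|>k$, then for every $B\subseteq A$ with $|B|=k$ the pair $(B,x|_B)$ lies in $\Rcal_k$, and likewise $(A,x|_A)\in\Rcal_k$; so $k$-robustness on $\Scal$ forces $\kappa_B(x|_B)=\kappa_{[n]}(x)=\kappa_A(x|_A)$. Hence every one of the $\binom{|A|}{k}$ factors in the product above equals $\kappa_A(x|_A;x_0)$, the un-normalized geometric mean equals $\kappa_A(x|_A;x_0)$, the normalization constant equals $\sum_{x_0'}\kappa_A(x|_A;x_0')=1$, and we conclude $\tilde\kappa_A(x|_A)=\kappa_A(x|_A)$. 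This settles the theorem whenever $(\kappa_A)$ is strictly positive.

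For general functional modalities the formula for $\tilde\kappa_C$ can involve division by a vanishing $Z_{C,x_C}$, so the construction must be carried out by approximation, and this is where I expect the main difficulty. I would choose strictly positive modalities $(\kappa^{(m)}_A)$ with $\kappa^{(m)}_A\to\kappa_A$ for every $A$ with $|A|\le k$, form the corresponding $(\tilde\kappa^{(m)}_A)\in\tilde M_{k+1}$ as above, and pass to a convergent subsequence using compactness of the space of functional modalities (each $\kappa_A$ ranges over a product of simplices). The limit $(\tilde\kappa_A)$ then lies in $\overline{\tilde M_{k+1}}$ and satisfies $\tilde\kappa_A=\kappa_A$ for $|A|\le k$. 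To verify agreement at $x\in\Scal$ with $|A|>k$, I would observe that the un-normalized means $g^{(m)}(x_0):=\big(\prod_{B\subseteq A,\,|B|=k}\kappa^{(m)}_B(x|_B;x_0)\big)^{1/\binom{|A|}{k}}$ converge, by continuity together with $k$-robustness of the \emph{limit} $(\kappa_A)$, to $\kappa_{[n]}(x;x_0)=\kappa_A(x|_A;x_0)$; consequently the normalizations $\sum_{x_0'}g^{(m)}(x_0')$ converge to $1$, which is bounded away from $0$. Thus no $0/0$ degeneracy occurs at $x$, even if it occurs elsewhere, and $\tilde\kappa_A(x|_A)=\kappa_A(x|_A)$ follows. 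The key point that makes the boundary case work is precisely that robustness pins the relevant normalization to $1$ exactly at the points of $\Scal$.

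Finally, the closing ``in particular'' statement is immediate from Lemma~\ref{lem:Ktilde-und-K-k+1}, which gives $\tilde M_{k+1}\subseteq M_{k+1}$ and hence $\overline{\tilde M_{k+1}}\subseteq\overline{M_{k+1}}$.
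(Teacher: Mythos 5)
Your proposal is correct and follows essentially the same route as the paper: define $\tilde\kappa_A=\kappa_A$ for $|A|\le k$ and as the normalized geometric mean of the $\kappa_B$ with $|B|=k$ otherwise, then use $k$-robustness to collapse every factor to $\kappa_A(x|_A;x_0)$ at points of $\Scal$, forcing $Z_{A,x|_A}=1$ there. Your additional approximation argument for non-strictly-positive modalities fills in a degeneracy (possible vanishing of $Z_{C,x_C}$ off $\Scal$) that the paper's proof silently absorbs into the phrase ``lies in the closure of $\tilde M_{k+1}$,'' so it is a welcome refinement rather than a different approach.
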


\begin{proof}
  Define $(\tilde\kappa_{A})$ via
  \begin{equation*}
    \tilde\kappa_{A}(x_{A}; x_{0}) =
    \begin{cases}
      \kappa_{A}(x_{A}; x_{0}), &\quad\text{ if }|A|\le k,\\
      \frac{1}{Z_{A,x_{A}}} \left(\prod_{\substack{B\subseteq A\\|B|=k}}\kappa_{B}(x_{A}|_{B};x_{0})\right)^{1/{\binom{|A|}{k}}}, &\quad\text{ else,}
    \end{cases}
  \end{equation*}
  where $Z_{A,x_{A}}$ is a normalization constant.  By definition, $(\tilde\kappa_{A})$ lies in the closure of~$\tilde
  M_{k+1}$.  Let $x\in\Scal$ and $C\subseteq[n]$.  If $|C|\le k$, then $\tilde\kappa_{C}(x|_{C}) = \kappa_{C}(x|_{C})$
  by definition of~$\tilde\kappa_{A}$.  So assume that $|C|>k$.  By definition of $k$-robustness, if $x\in\Scal$, then
  $\kappa_{C}(x|_{C}) = \kappa_{B}(x|_{B})$ for all $B\subset C$ with~$|B|=k$.  Therefore, if $x\in\Scal$ and~$|C|>k$,
  then
  \begin{equation*}
    \kappa_{C}(x|_{C};x_{0}) =  \left(\prod_{\substack{B\subseteq C\\|B|=k}}\kappa_{B}(x_{C}|_{B};x_{0})\right)^{1/{\binom{|C|}{k}}}.
  \end{equation*}
  Therefore, if $x\in\Scal$ and $|C|>k$, then $Z_{C,x|_{C}}=1$ and $\kappa_{C}(x|_{C})=\tilde\kappa_{C}(x|_{C})$.
\end{proof}

Since $M_{k+1}$ and $\tilde M_{k+1}$ are independent of $\Scal$, Theorem~\ref{thm:rob-kernel-k-interactions} shows
that these two families can be used to construct robust systems, when the set $\Scal$ 
is not known a priori but must be learnt by the system, or when $\Scal$ 
changes with time and the system must adapt.

If we are not interested in all functional modalities but just the stochastic map $\kappa$ describing the unperturbed
system, we can describe $\kappa$ in terms of low interaction order.  The \emph{family of $(k+1)$-interaction stochastic
  maps}, denoted by~$K_{k+1}$, consists of all strictly positive maps $\kappa$ such that
\begin{equation*}
  \ln \kappa(x;x_{0}) = \sum_{\substack{A\subseteq[n] \\ |A|\le k}}\Psi_{A}(x|_{A};x_{0})
\end{equation*}
for some real functions $\Psi_{A}:\Xcal_{A}\to\Rb$.
\begin{cor}
  \label{cor:k-interaction-stochmap}
  Let $\kappa$ be a stochastic map.  For given $k$ there exists a stochastic map $\tilde\kappa$ in the closure of $K_{k+1}$
  such that the following holds: If $\kappa$ is $k$-robust on a set~$\Scal$,
  then $\kappa(x) = \tilde\kappa(x)$ for all~$x\in\Scal$.
\end{cor}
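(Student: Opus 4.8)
The plan is to read the corollary off Theorem~\ref{thm:rob-kernel-k-interactions} by projecting the family of functional modalities onto its $[n]$-component. First I would unwind what $k$-robustness means for a single stochastic map: by definition, if $\kappa$ is $k$-robust on $\Scal$, then there exist functional modalities $(\kappa_A)$ with $\kappa_{[n]}=\kappa$ that are $k$-robust on $\Scal$ in the sense of~\eqref{invar-gen}. This is exactly the hypothesis of Theorem~\ref{thm:rob-kernel-k-interactions}, so the theorem supplies functional modalities $(\tilde\kappa_A)$ in the closure of $\tilde M_{k+1}$ --- and hence, by Lemma~\ref{lem:Ktilde-und-K-k+1}, in $\overline{M_{k+1}}$ --- with $\kappa_A(x|_A)=\tilde\kappa_A(x|_A)$ for all $A\subseteq[n]$ and all $x\in\Scal$. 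Taking $A=[n]$ gives $\tilde\kappa_{[n]}(x)=\kappa(x)$ for every $x\in\Scal$, so I would set $\tilde\kappa:=\tilde\kappa_{[n]}$ and reduce the corollary to the claim $\tilde\kappa\in\overline{K_{k+1}}$.

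The key point is that the $[n]$-component of any member of $M_{k+1}$ lies in $K_{k+1}$. For $(\kappa_A)\in M_{k+1}$ with potentials $\phi_A(x_A;x_0)=\sum_{B\subseteq A,\,|B|\le k}\alpha_{A,B}\Psi_B(x_A|_B;x_0)$, the Gibbs representation~\eqref{intpot} gives
\begin{equation*}
  \ln\kappa_{[n]}(x;x_0)=\sum_{A\subseteq[n]}\phi_A(x|_A;x_0)-N(x)=\sum_{\substack{B\subseteq[n]\\|B|\le k}}\Big(\sum_{A\supseteq B}\alpha_{A,B}\Big)\Psi_B(x|_B;x_0)-N(x),
\end{equation*}
where $N(x)$ is the $x_0$-independent normalization. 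The part of $\ln\kappa_{[n]}$ depending on $x_0$ is therefore a sum of interaction terms each involving at most $k$ of the input variables, which is precisely the structure defining $K_{k+1}$; the $x_0$-free summand $N(x)$ does not affect the conditional distribution and is absorbed into the defining potentials. Hence the coordinate projection $\pi\colon(\kappa_A)\mapsto\kappa_{[n]}$ maps $M_{k+1}$ into $K_{k+1}$.

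It then remains to pass to closures. The space of functional modalities is a product of compact simplices and $\pi$ is continuous, so $\pi(\overline{M_{k+1}})\subseteq\overline{\pi(M_{k+1})}\subseteq\overline{K_{k+1}}$. Since $(\tilde\kappa_A)\in\overline{M_{k+1}}$, this yields $\tilde\kappa=\pi\big((\tilde\kappa_A)\big)\in\overline{K_{k+1}}$, as required.

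I expect the one genuine subtlety to be the identification $\pi(M_{k+1})\subseteq K_{k+1}$: one must check that the $x_0$-independent normalization $N(x)$ does not disturb the interaction order recorded by $K_{k+1}$ --- that is, that the defining equation of $K_{k+1}$ is to be read modulo $x_0$-free functions --- and that interchanging the projection with the closure is legitimate. Everything else is a direct translation of the definitions together with an appeal to the theorem.
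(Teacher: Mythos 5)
Your argument is correct and follows essentially the paper's own proof: extend $\kappa$ to $k$-robust functional modalities $(\kappa_A)$ with $\kappa=\kappa_{[n]}$, apply Theorem~\ref{thm:rob-kernel-k-interactions}, and set $\tilde\kappa=\tilde\kappa_{[n]}$. You additionally spell out the step the paper leaves implicit --- that projecting $\overline{M_{k+1}}$ onto its $[n]$-component lands in $\overline{K_{k+1}}$ --- and your reading of the definition of $K_{k+1}$ modulo the $x_0$-free normalization term is the intended one (compare the inclusion of the full face $\{1,\dots,n\}$ in $\tilde\Delta$ in the discussion following Corollary~\ref{cor:p-n-kappa}).
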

\begin{proof}
  If $\kappa$ is $k$-robust on~$\Scal$, there exist functional modalities $(\kappa_{A})_{A}$ with $\kappa=\kappa_{[n]}$.
  Choose $(\tilde\kappa_{A})$ as in Theorem~\ref{thm:rob-kernel-k-interactions}.  If~$x\in\Scal$, then
  $\kappa(x)=\kappa_{[n]}(x)=\tilde\kappa_{[n]}(x)$.  Hence the Corollary holds true with
  $\tilde\kappa=\tilde\kappa_{[n]}$.
\end{proof}

\begin{ex}
  The functional modalities~\eqref{eq:neuron-funcmod} do not lie in~$\tilde M_{2}$.  This does not mean that neural
  networks are not robust: In fact, it is possible to naturally redefine the functional
  modalities~\eqref{eq:neuron-funcmod} such that the new functional modalities lie in~$\tilde M_{2}$.

  The construction~\eqref{eq:neuron-funcmod} identifies the summand $w_{i}x_{i}x_{0}$ with~$\phi_{\{i\}}$.  Now we will
  make another identification: For each $i\in[n]$ let
  \begin{equation*}
    \kappa_{\{i\}}(x_{i}; x_{0}) = \frac{1}{Z_{i,x_{i}}}\exp(n\,w_{i}\,x_{i}\,x_{0} - \eta)\,.
  \end{equation*}
  The unique extension of these stochastic maps to
  functional modalities $(\kappa_{A})$ in $M_{2}$ is given by
  \begin{equation}
    \label{eq:neuron-funcmod-rob}
    \kappa_{A}(x|_{A};x_{0}) = \frac{1}{Z_{A,x|_{A}}'}\left(\prod_{i\in A}\kappa_{\{i\}}(x_{i}; x_{0})\right)^{1/|A|}
    = \frac{1}{Z_{A,x|_{A}}}\exp\left(\frac{n}{|A|}\sum_{i\in A}w_{i}x_{i}x_{0} - \eta\right)\,,
  \end{equation}
  where $Z_{A,x|_{A}}$ and $Z_{A,x|_{A}}'$ are constants determined by normalization.  The functional modalities defined in this way
  lie in $\tilde M_{2}$, and the stochastic map $\kappa_{[n]}$ agrees with~\eqref{neuron2}.  Note that, by tuning the
  parameters $w_{1},\dots,w_{n}$, any combination of stochastic maps is possible for $\kappa_{1},\dots,\kappa_{n}$.
  This shows that any element of $\tilde M_{2}$ has a representation of the form~\eqref{eq:neuron-funcmod-rob}.

  As in example~\ref{ex:linear-threshold} we can scale the weights $w_{i}$ and the threshold $\eta$ by a factor of
  $\beta$ and send $\beta\to+\infty$.  This leads to the rule
  \begin{equation}
    \label{ex:robust-signum-modalities}
    \kappa_{A}(x_A; +1)
    \to \theta(\frac{n}{|A|}\sum_{i\in A} w_{i}x_{i} - \eta),
  \end{equation}
  which is a normalized variant of~\eqref{ex:signum-modalities}.

  The rule~\eqref{eq:neuron-funcmod-rob} implements a renormalization of the effect of the remaining inputs under
  knockout.  Similar renormalization procedures are sometimes used when training neural networks using Hebb's rule.
  Usually the total sum of the weights $\sum_{i}w_{i}$ is normalized to not grow to infinity.  The
  rule~\eqref{eq:neuron-funcmod-rob} suggests that under knockout all remaining weights are amplified by a common factor.
\end{ex}

The ideas leading to Theorem~\ref{thm:rob-kernel-k-interactions} can be applied to more general robustness
structures~$\Rcal$ as follows: For any $x\in\Xcal$ let
\begin{equation*}
  \Rcal_{x} :=
  \begin{cases}
    \Big\{ R\subseteq[n] : (R,x|_{R})\in\Rcal \Big\}, &\text{ if there exists }R\subseteq[n]\text{ with }(R,x|_{R})\in\Rcal, \\
    \big\{[n]\big\}, &\text{ else},
  \end{cases}
\end{equation*}
and let $\Rcal_{x}^{\min}$ be the subset of inclusion-minimal elements of~$\Rcal_{x}$.  If $(\kappa_{A})$ is
$\Rcal$-robust in $\Scal$, then
\begin{equation*}
  \kappa(x; x_{0}) = \kappa_{R}(x|_{R}; x_{0})\quad\text{ for any }R\in\Rcal_{x}^{\min}, \;x\in\Scal
\end{equation*}
and hence
\begin{equation*}
  \kappa(x; x_{0}) = \left(\prod_{R\in\Rcal_{x}^{\min}}\kappa_{R}(x|_{R}; x_{0})\right)^{1/|\Rcal_{x}^{\min}|}\,.
\end{equation*}

For any $C\subseteq[n]$ let $\Rcal_{x}^{\min}(C) = \{R\in\Rcal_{x}^{\min} : C\subseteq R\}$.  If $\Rcal$ is coherent,
then we can deduce
\begin{equation}
  \label{eq:geom-mean}
  \kappa_{C}(x|_{C}; x_{0}) = \left(\prod_{R\in\Rcal_{x}^{\min}(C)}\kappa_{R}(x|_{R}; x_{0})\right)^{1/|\Rcal_{x}^{\min}(C)|}
\end{equation}
for all $x\in\Scal$ with $\Rcal_{x}^{\min}(C)\neq\emptyset$.  This motivates the following definition: Denote by $\tilde
M_{\Rcal}$ the set of all strictly positive functional modalities that satisfy
\begin{equation*}
  \kappa_{C}(x|_{C}; x_{0}) = \frac{1}{Z_{C,x|_{C}}}\left(\prod_{R\in\Rcal_{x}^{\min}(C)}\kappa_{R}(x|_{R}; x_{0})\right)^{1/|\Rcal_{x}^{\min}(C)|}
\end{equation*}
for all~$x\in\Xcal$ and all $C\subseteq[n]$ with $\Rcal_{x}^{\min}(C)\neq\emptyset$, where $Z_{C,x|_{C}}$ is a suitable
normalization constant.
%
%
The same proof as for Theorem~\ref{thm:rob-kernel-k-interactions} implies:
\begin{thm}
  \label{thm:rob-kernel-means}
  Let $(\kappa_{A})$ be functional modalities, and assume that $\Rcal$ is coherent.  Then there exist functional
  modalities $(\tilde\kappa_{A})$ in the closure of $\tilde M_{\Rcal}$ such that the following holds: If $(\kappa_{A})$
  is $\Rcal$-robust on a set $\Scal\subseteq\Xcal$, then
  \begin{equation*}
    \kappa_{A}(x|_{A}) = \tilde\kappa_{A}(x|_{A})\quad\text{ for all }x\in\Scal.
  \end{equation*}
\end{thm}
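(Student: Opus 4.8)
The plan is to follow the structure of the proof of Theorem~\ref{thm:rob-kernel-k-interactions}, which the paper explicitly says applies here, and identify the geometric-mean construction~\eqref{eq:geom-mean} as the natural candidate for $(\tilde\kappa_{A})$. First I would define $(\tilde\kappa_{A})$ directly by the defining equation of $\tilde M_{\Rcal}$: for each $x\in\Xcal$ and each $C\subseteq[n]$ with $\Rcal_{x}^{\min}(C)\neq\emptyset$, set
\begin{equation*}
  \tilde\kappa_{C}(x|_{C}; x_{0}) = \frac{1}{Z_{C,x|_{C}}}\left(\prod_{R\in\Rcal_{x}^{\min}(C)}\kappa_{R}(x|_{R}; x_{0})\right)^{1/|\Rcal_{x}^{\min}(C)|},
\end{equation*}
and handle the remaining cases ($\Rcal_{x}^{\min}(C)=\emptyset$) by, e.g., copying $\kappa_{C}$. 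By construction $(\tilde\kappa_{A})$ lies in the closure of $\tilde M_{\Rcal}$ (it lies in $\tilde M_{\Rcal}$ itself whenever the original modalities are strictly positive, and a limiting argument mixing in the uniform modalities $\lambda_{A}(x_{A};x_{0})=1/d_{0}$ covers the non-positive case, exactly as in the commented-out portion of the earlier proof).

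Next I would verify the agreement $\kappa_{A}(x|_{A})=\tilde\kappa_{A}(x|_{A})$ for $x\in\Scal$. Fix $x\in\Scal$ and $C\subseteq[n]$. If $\Rcal_{x}^{\min}(C)=\emptyset$ there is nothing to check by the choice above. Otherwise, $\Rcal$-robustness in $\Scal$ gives $\kappa(x;x_{0})=\kappa_{R}(x|_{R};x_{0})$ for every $R\in\Rcal_{x}^{\min}$, and coherence lets me promote this to the intermediate statement~\eqref{eq:geom-mean}, namely $\kappa_{C}(x|_{C};x_{0})=\bigl(\prod_{R\in\Rcal_{x}^{\min}(C)}\kappa_{R}(x|_{R};x_{0})\bigr)^{1/|\Rcal_{x}^{\min}(C)|}$. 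Since all the factors $\kappa_{R}(x|_{R})$ and $\kappa_{C}(x|_{C})$ are genuine probability distributions over $\Xcal_{0}$ summing to $1$, the geometric mean on the right is already normalized when evaluated at $x$, so the normalization constant satisfies $Z_{C,x|_{C}}=1$. Comparing with the definition of $\tilde\kappa_{C}$ then yields $\kappa_{C}(x|_{C})=\tilde\kappa_{C}(x|_{C})$, as desired.

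The one place that genuinely needs care — and the main obstacle — is the step from the pointwise robustness identities to the coherent relation~\eqref{eq:geom-mean}. The derivation of~\eqref{eq:geom-mean} in the text uses coherence of $\Rcal$ to relate $\kappa_{C}$ to the minimal modalities $\kappa_{R}$ with $C\subseteq R$, and one must check that $\Rcal_{x}^{\min}(C)$ is nonempty precisely when this is meaningful and that the passage $\kappa(x)=\kappa_{R}(x|_{R})$ for all $R\in\Rcal_{x}^{\min}$ indeed forces $\kappa_{C}(x|_{C})$ to equal the common value of the relevant $\kappa_{R}$ (so that the geometric mean collapses correctly). I would treat~\eqref{eq:geom-mean} as already established in the preceding discussion and simply invoke it; the only additional remark needed is the observation that normalization is automatic on $\Scal$ because both sides are probability vectors there. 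Finally I would note that the non-strictly-positive case reduces to the positive one by the same $\epsilon$-mixing with $(\lambda_{A})$ used before, since $(\lambda_{A})$ is trivially $\Rcal$-robust and $\tilde M_{\Rcal}$ is stable under the corresponding convex combinations, giving $(\tilde\kappa_{A})$ in the closure.
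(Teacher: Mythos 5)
Your proposal is correct and matches the paper's own argument: the paper proves this theorem simply by remarking that the proof of Theorem~\ref{thm:rob-kernel-k-interactions} carries over, and that is exactly what you carry out --- define $(\tilde\kappa_{A})$ by the normalized geometric mean over $\Rcal_{x}^{\min}(C)$ (copying $\kappa_{A}$ where the constraint is vacuous), note membership in the closure of $\tilde M_{\Rcal}$ by construction, and use $\Rcal$-robustness together with coherence to see that all factors agree on $\Scal$, so that $Z_{C,x|_{C}}=1$ and the two families coincide there. No further steps are needed.
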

As a generalization of Lemma~\ref{lem:Ktilde-und-K-k+1}, we can also describe $\tilde M_{\Rcal}$ as a set of functional
modalities with limited interaction order.  To simplify the presentation, we assume that $\Rcal$ is \emph{saturated}, by
which we mean the following: If $(R,x_{R})\in\Rcal$ for some $x_{R}\in\Xcal_{R}$, then $(R,x_{R}')\in\Rcal$ for
all~$x_{R}\in\Xcal_{R}$.  In other words, a saturated robustness specification is given by enumerating a family of
subsets of~$[n]$.  For example, the robustness structures $\Rcal_{k}$ are saturated, while the robustness structures
defining canalyzing and nested canalyzing functions (see Section~\ref{sec:canalyzing-functions}) are not saturated.  If
$\Rcal$ is saturated, then $\Rcal_{x}$ and $\Rcal^{\min}_{x}$ are independent of~$x\in\Xcal$.

Consider the family
\begin{equation*}
  \Delta = \Big\{ C\subseteq[n] : C\subseteq R\text{ for some }R \in \Rcal_{x}^{\min}\text{ and }x\in\Xcal\Big\}\,,  
\end{equation*}
and let $\Delta(C)=\{R\in\Delta : R\subseteq C\}$.  Let $M_{\Delta}$ be the set of all functional modalities
$(\kappa_{A})$ such that there exist potentials $\phi_{A}$ of the form
\begin{equation}
  \label{eq:def-KDelta}
  \phi_{A}(x_{A};x_{0}) = \sum_{B\in\Delta(A)}\alpha_{A,B}\Psi_{B}(x_{A}|_{B};x_{0}),  
\end{equation}
where $\alpha_{A,B}\in\Rb$ and $\Psi_{B}$ is an arbitrary function $\Rb^{\Xcal_{B}\times\Xcal_{0}}\to\Rb$.  We call
$M_{\Delta}$ the \emph{family of $\Delta$-interaction functional modalities}.  Note that the functions $\Psi_{B}$ do not
depend on~$A$.  This ensures a certain interdependence among the functional modalities~$\kappa_{A}$.

\begin{lemma}
  \label{lem:Ktilde-and-K}
  Assume that $\Rcal$ is coherent and saturated.  $\tilde M_{\Rcal}$ is a subset 
  of~$M_{\Delta}$.
\end{lemma}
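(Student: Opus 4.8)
The plan is to mimic the proof of Lemma~\ref{lem:Ktilde-und-K-k+1}: starting from an arbitrary $(\kappa_A)\in\tilde M_\Rcal$, I would produce Gibbs potentials of the shape~\eqref{eq:def-KDelta} and simply read off that they are supported on~$\Delta(A)$. Since $\Rcal$ is saturated, $\Rcal^{\min}$ and hence $\Delta$ do not depend on~$x$, so $\Delta$ is a fixed simplicial complex whose facets are the members of~$\Rcal^{\min}$; coherence guarantees that these facets really are the inclusion-minimal remaining sets and that the nested geometric means defining $\tilde M_\Rcal$ are mutually consistent. The defining relations then express every $\kappa_C$ with $C\notin\Delta$ as a normalized geometric mean of modalities $\kappa_B$ whose indices $B$ are faces of~$\Delta$ contained in~$C$. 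Accordingly I set $\Psi_B:=\ln\kappa_B$ for each $B\in\Delta$; these play the role of the generating functions of the $\Delta$-interaction representation, and they form a single family shared by all~$A$.

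First I would write down the canonical potentials of $(\kappa_A)$ via the M\"obius inversion~\eqref{eq:Moebiuspotentials},
\[
  \tilde\phi_A(x_A;x_0)=\sum_{C\subseteq A}(-1)^{|A\setminus C|}\ln\kappa_C(x_A|_C;x_0),
\]
and substitute the defining equations of $\tilde M_\Rcal$ term by term. For $C\in\Delta$ one has $\ln\kappa_C=\Psi_C$ directly. For $C\notin\Delta$ the geometric-mean relation gives
\[
  \ln\kappa_C(x_A|_C;x_0)=-\ln Z_{C,x_A|_C}+\sum_{B}w_{C,B}\,\Psi_B(x_A|_B;x_0),
\]
where $B$ ranges over the faces of $\Delta$ appearing in that relation, each satisfying $B\subseteq C$, and the weights $w_{C,B}$ are constants by saturation. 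The decisive observation is that in both cases every surviving $\Psi_B$ carries an index with $B\subseteq C\subseteq A$ and $B\in\Delta$, that is $B\in\Delta(A)$.

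Collecting the contributions and discarding the $x_0$-independent normalizations $\ln Z_{C,x_A|_C}$ --- which is permitted because, as noted after~\eqref{eq:Moebiuspotentials}, the representation~\eqref{intpot} is unchanged if a function not depending on $x_0$ is added to a potential --- I obtain scalars $\alpha_{A,B}$ with
\[
  \tilde\phi_A=\sum_{B\in\Delta(A)}\alpha_{A,B}\,\Psi_B+(\text{a function independent of }x_0).
\]
Since two potential families that differ only by $x_0$-independent functions induce the same functional modalities, the potentials $\phi_A:=\sum_{B\in\Delta(A)}\alpha_{A,B}\Psi_B$ again reproduce $(\kappa_A)$ and are exactly of the form~\eqref{eq:def-KDelta}. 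Hence $(\kappa_A)\in M_\Delta$, proving the inclusion.

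The step I expect to be the main obstacle is the bookkeeping guaranteeing that no index outside $\Delta$ survives: one must verify that each determined modality $\kappa_C$ (for $C\notin\Delta$) is expressed purely through faces $B\in\Delta$ with $B\subseteq C$, so that the substitution can never reintroduce a $\Psi_C$ with $C\notin\Delta$. This is precisely the point at which saturation and coherence are used. By contrast, the explicit evaluation of the coefficients $\alpha_{A,B}$ by an inclusion--exclusion identity, which in the uniform case was the computation~\eqref{eq:Ktilde-calculation}, is not needed here: for the inclusion $\tilde M_\Rcal\subseteq M_\Delta$ only the support of the potentials matters, not the precise values of the coefficients.
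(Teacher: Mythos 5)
Your strategy coincides with the paper's: apply the M\"obius inversion \eqref{eq:Moebiuspotentials}, substitute the geometric-mean relations defining $\tilde M_{\Rcal}$ so that the logarithms of the determined modalities collapse onto $\ln\kappa_{B}$ for $B\in\Rcal_{x}^{\min}$, and read off that the resulting potentials are supported on $\Delta(A)$. Your remark that, unlike in Lemma~\ref{lem:Ktilde-und-K-k+1}, the coefficients need not be evaluated explicitly is also exactly how the paper proceeds.

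However, there is a genuine gap at precisely the step you flag as the main obstacle, and saturation and coherence do not close it. The defining relations of $\tilde M_{\Rcal}$ constrain $\kappa_{C}$ only when $\Rcal_{x}^{\min}(C)\neq\emptyset$, i.e.\ when $C$ contains some element of $\Rcal_{x}^{\min}$. A set $C$ that neither contains nor is contained in an element of $\Rcal_{x}^{\min}$ lies outside $\Delta$ yet carries a completely unconstrained modality; for such $C$ your sentence ``for $C\notin\Delta$ the geometric-mean relation gives\dots'' invokes a relation that does not exist, and the M\"obius potential of such a $\kappa_{C}$ genuinely depends on the joint state $x_{C}$, so it cannot be rewritten over $\Delta(C)$. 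Concretely, for $n=2$ and the coherent, saturated specification $\Rcal=\{(R,x_{R}):1\in R\}$ one has $\Rcal_{x}^{\min}=\{\{1\}\}$ and $\Delta=\{\emptyset,\{1\}\}$; then $\kappa_{\{2\}}$ is arbitrary in $\tilde M_{\Rcal}$, whereas every element of $M_{\Delta}$ has $\kappa_{\{2\}}$ independent of $x_{2}$, since $\Delta(\{2\})=\{\emptyset\}$. To be fair, the paper's own proof glosses over the same point: it retains the terms $\ln\kappa_{C}$ for every $C\notin\Rcal_{x}$ and declares the result ``clearly of the form \eqref{eq:def-KDelta}'', which is only true when every subset of $[n]$ is comparable to some element of $\Rcal_{x}^{\min}$, as happens for $\Rcal_{k}$. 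To make your substitution go through in general you must either assume this comparability explicitly or enlarge $\Delta$ to the simplicial complex consisting of all sets not in $\Rcal_{x}$ together with the elements of $\Rcal_{x}^{\min}$; with that modification your argument, and the paper's, works verbatim.
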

\begin{proof}
  If $\Rcal_{x}=\emptyset$, 
  then $\Delta$ contains all sets.  The Möbius inversion formula shows
  that $M_{\Delta}$ contains all strictly positive functional modalities.  Therefore, we may assume that
  $\Rcal_{x}\neq\emptyset$. 

  Define Gibbs potentials using the Möbius inversion~\eqref{eq:Moebiuspotentials}.  If $x\in\Scal$ and $A$ is large
  enough such that $\Rcal_{x}^{\min}(A)\neq\emptyset$, then
  \begin{align*}         
    \sum_{\substack{C \subseteq A \\ C\in\Rcal_{x}}} (-1)^{| A \setminus C |}   \ln \kappa_C (x|_C ; x_0 )       
    & = \sum_{\substack{C \subseteq A \\ C\in\Rcal_{x}}} (-1)^{| A \setminus C |} \,  \frac{1}{|\Rcal_{x}^{\min}(C)|}
    \sum_{B\in\Rcal_{x}^{\min}(C)} \ln \kappa_C (x|_C ; x_0 )     \\
    & = \sum_{\substack{C \subseteq A \\ C\in\Rcal_{x}}} (-1)^{| A \setminus C |} \,  \frac{1}{|\Rcal_{x}^{\min}(C)|}
    \sum_{B\in\Rcal_{x}^{\min}(C)} \ln \kappa_B (x|_B ; x_0 )     \\        
     = \sum_{B\in\Rcal_{x}^{\min}(C)}& \left\{ \sum_{R \subseteq A \setminus B } 
      (-1)^{| A | - |R| - k} \,  \frac{1}{|\Rcal_{x}^{\min}(B\cup R)|} \right\} \,  \ln \kappa_B (x|_B ; x_0 )\,.
  \end{align*}
  Together with~\eqref{eq:Moebiuspotentials} this gives
  \begin{equation*}
    \phi_A (x|_A, x_0) 
    \; = \; \sum_{\substack{C \subseteq A \\ C \notin\Rcal_{x}}}  \alpha_{A,C} \,  \ln \kappa_C(x|_C ; x_0 )
    + \sum_{\substack{C \subseteq A \\ C\in\Rcal_{x}^{\min}}}  \alpha_{A,C} \,  \ln \kappa_C(x|_C ; x_0 )\, ,
  \end{equation*}
  where
  \begin{equation*}
    \alpha_{A,C} =
    \begin{cases}
      (-1)^{|A| - |C|}, & \text{ if } C\notin\Rcal_{x}, \\
      \sum_{R \subseteq A \setminus B } 
        (-1)^{| A | - |R| - k} \,  \frac{1}{|\Rcal_{x}^{\min}(B\cup R)|}
      &  \text{ if } C\in\Rcal_{x}^{\min}\,.
    \end{cases}
  \end{equation*}
  This is clearly of the form~\eqref{eq:def-KDelta}.
\end{proof}

In the case $\Rcal=\Rcal_{k}$ the sum %
$\sum_{R\subseteq A\setminus B} (-1)^{|A|-|R|-k}\,\frac{1}{|\Rcal_{x}^{\min}(B\cup R)|}$ that appears in the proof of
Lemma~\ref{lem:Ktilde-and-K} can be solved explicitly, resulting in the statement of Lemma~\ref{lem:Ktilde-und-K-k+1}.
In the general case this is not possible.

Corollary~\ref{cor:k-interaction-stochmap} also generalizes.  Let $\Delta$ be as above.  The set $K_{\Delta}$ of
\emph{$\Delta$-interactions stochastic maps} consists of all strictly positive stochastic maps $\kappa$ such that
\begin{equation*}
  \ln \kappa(x;x_{0}) = \sum_{\substack{A\in\Delta}}\Psi_{A}(x|_{A};x_{0})
\end{equation*}
for some real functions $\Psi_{A}:\Xcal_{A}\to\Rb$.
\begin{cor}
  \label{cor:interaction-stochmap}
  Let $\kappa$ be a stochastic map, and let $\Rcal$ be a coherent and saturated robustness specification.  There exists
  a stochastic map $\tilde\kappa$ in the closure of $K_{\Delta}$ 
  such that the following holds: If $\kappa$ is $\Rcal$-robust on a set~$\Scal$, then $\kappa(x) = \tilde\kappa(x)$ for
  all~$x\in\Scal$.
\end{cor}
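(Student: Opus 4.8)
The plan is to mimic the proof of Corollary~\ref{cor:k-interaction-stochmap}, replacing Theorem~\ref{thm:rob-kernel-k-interactions} by its coherent generalization Theorem~\ref{thm:rob-kernel-means}. First I would unwind the definition of $\Rcal$-robustness of the stochastic map $\kappa$: by definition there exist functional modalities $(\kappa_{A})$ with $\kappa = \kappa_{[n]}$ that are $\Rcal$-robust on $\Scal$. Since $\Rcal$ is coherent, Theorem~\ref{thm:rob-kernel-means} applies and produces functional modalities $(\tilde\kappa_{A})$ in the closure of $\tilde M_{\Rcal}$ with $\kappa_{A}(x|_{A}) = \tilde\kappa_{A}(x|_{A})$ for all $x \in \Scal$. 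I would then set $\tilde\kappa := \tilde\kappa_{[n]}$, so that for every $x \in \Scal$ we obtain $\kappa(x) = \kappa_{[n]}(x) = \tilde\kappa_{[n]}(x) = \tilde\kappa(x)$, which is exactly the agreement on $\Scal$ claimed by the corollary.

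The remaining, and only nontrivial, point is to verify that $\tilde\kappa = \tilde\kappa_{[n]}$ lies in the closure of $K_{\Delta}$. The key observation is that the map sending functional modalities to their component $\kappa_{[n]}$ carries $M_{\Delta}$ into $K_{\Delta}$. Indeed, $\Delta([n]) = \Delta$, so for strictly positive $(\kappa_{A}) \in M_{\Delta}$ the defining representation~\eqref{eq:def-KDelta} gives $\ln\kappa_{[n]}(x;x_{0}) = \sum_{B \in \Delta} \alpha_{[n],B}\Psi_{B}(x|_{B};x_{0})$; absorbing each scalar $\alpha_{[n],B}$ into the corresponding $\Psi_{B}$ exhibits $\kappa_{[n]}$ as an element of $K_{\Delta}$. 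By Lemma~\ref{lem:Ktilde-and-K} we have $\tilde M_{\Rcal} \subseteq M_{\Delta}$, and hence the component $\kappa_{[n]}$ of every member of $\tilde M_{\Rcal}$ lies in $K_{\Delta}$.

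Finally I would pass to closures. Since $(\tilde\kappa_{A})$ lies in the closure of $\tilde M_{\Rcal}$, it is a limit of functional modalities in $\tilde M_{\Rcal} \subseteq M_{\Delta}$, and continuity of the projection onto the $[n]$-indexed component yields $\tilde\kappa_{[n]}$ as a limit of elements of $K_{\Delta}$; hence $\tilde\kappa = \tilde\kappa_{[n]}$ lies in the closure of $K_{\Delta}$, completing the argument. I do not expect any serious obstacle here: the whole proof is structurally identical to that of Corollary~\ref{cor:k-interaction-stochmap}, and the only mild subtlety is the bookkeeping with the index set $\Delta([n]) = \Delta$, the harmless absorption of the coefficients $\alpha_{[n],B}$, and the routine continuity argument needed to move the containment $\tilde M_{\Rcal} \subseteq M_{\Delta}$ through to the closures.
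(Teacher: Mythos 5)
Your proof is correct and is essentially the paper's own argument: the paper simply declares that the proof is the same as that of Corollary~\ref{cor:k-interaction-stochmap}, namely take functional modalities $(\kappa_A)$ witnessing the $\Rcal$-robustness of $\kappa$, apply Theorem~\ref{thm:rob-kernel-means}, and set $\tilde\kappa=\tilde\kappa_{[n]}$, with the membership of $\tilde\kappa_{[n]}$ in the closure of $K_{\Delta}$ left implicit exactly where you make it explicit via Lemma~\ref{lem:Ktilde-and-K}. One cosmetic slip: by~\eqref{intpot}, $\ln\kappa_{[n]}$ collects the contributions $\sum_{B\in\Delta(A)}\alpha_{A,B}\Psi_{B}$ from \emph{every} potential $\phi_{A}$ with $A\subseteq[n]$ (plus the normalizing term), not just from $\phi_{[n]}$, but since this is still a linear combination of the $\Psi_{B}$ with $B\in\Delta$, your conclusion that the $[n]$-component of $M_{\Delta}$ lands in $K_{\Delta}$ is unaffected.
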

The proof is the same as the proof of Corollary~\ref{cor:k-interaction-stochmap}.
\begin{rem}
  \label{rem:one-for-all}
  Instead of representing functional modalities as a family $(\kappa_{A})$ of stochastic maps, it is possible to
  use a single stochastic map $\hat\kappa$, operating on a larger space, that integrates the information from the
  family~$(\kappa_{A})$.  The stochastic map $\hat\kappa$ can be constructed as follows: For each $i=1,\dots,n$ let
  $\hat\Xcal_{i}$ be the disjoint union of $\Xcal_{i}$ and one additional element, denoted by~$0$.  This additional
  state represents the knockout of $X_{i}$.  Let $\hat\Xcal_{\text{in}}=\hat\Xcal_{1}\times\dots\times\hat\Xcal_{n}$.
  For each $y\in\hat\Xcal_{\text{in}}$ let $\supp(y)=\{i: y_{i}\neq 0\}$.  We define the stochastic map
  $\hat\kappa:\Xcal_{0}\times\hat\Xcal_{\text{in}}$ via
  \begin{equation*}
    \hat\kappa(x; x_{0}) = \kappa_{\supp(x)}(x|_{\supp(x)}; x_{0}).
  \end{equation*}
  This construction gives a one-to-one correspondence between functional modalities and stochastic maps from $\hat\Xcal_{\text{in}}$
  to~$\Xcal_{0}$.

  As an example, consider the functional modalities defined in~\eqref{eq:neuron-funcmod}. In this example, the
  construction of $\hat\kappa$ is particularly easy: It just amounts to extending the input space to $\{-1,0,+1\}^{n}$.
  Equation~\eqref{neuron2} remains valid for~$\hat\kappa$.  The construction is more complicated for the functional
  modalities~\eqref{eq:neuron-funcmod-rob}.

  More generally, any Gibbs representation for functional modalities $(\kappa_{A})$ as in~\eqref{intpot} extends to a Gibbs
  representation of $\hat\kappa$: For any $B\subseteq[n]$, $x_{0}\in\Xcal_{0}$ and $x\in\hat\Xcal_{\text{in}}$ let
  \begin{equation*}
    \hat\phi_{B}(x,x_{0})=
    \begin{cases}
      \phi_{B}(x|_{B},x_{0}), & \qquad\text{ if }\supp(x)\subseteq B, \\
      0, & \qquad\text{ else.}
    \end{cases}
  \end{equation*}
  Then
  \begin{equation*}
    \hat\kappa(x; x_{0}) = \frac{e^{\sum_{B\subseteq[n]}\hat\phi_{B}(x,x_{0})}}{\sum_{x_{0}'\in\Xcal_{0}}e^{\sum_{B\subseteq[n]}\hat\phi_{B}(x,x_{0})}}.
  \end{equation*}
\end{rem}

\section{Robustness and conditional independence}
\label{sec:robustness-and-CI}

Given the probability distribution $\pin$ of the input variables and a stochastic map~$\kappa$ describing the system,
the joint probability distribution of the complete system can be computed from
\begin{equation*}
  p(x_{0},x) = \kappa(x;x_{0})\pin(x),\qquad\text{ for all }(x_{0},x)\in\Xtot,
\end{equation*}
As shown in Proposition~\ref{verb}, robustness of stochastic maps is related to conditional independence constraints on
the joint distribution.  In this section we study the set of all joint distributions that arise from robust systems in
this way.

Let $\Rcal$ be a robustness specification.  By Proposition~\ref{verb}, the stochastic map $\kappa$ is $\Rcal$-robust on
$\supp(\pin)$ if and only if for all $(R,x_R)\in\Rcal$ the output $X_{0}$ is (stochastically) independent of
$X_{[n]\setminus R}$, given that $X_{R}=x_{R}$.  In the following, this conditional independence (CI) statement will be
written as $\CI{X_{0}}{X_{[n]\setminus R}}[X_{R}=x_R]$.  This motivates the following definition: A joint distribution
$p$ is called \emph{$\Rcal$-robust} if it satisfies 
$\CI{X_{0}}{X_{[n]\setminus R}}[X_{R}=x_{R}]$
for all $(R,x_{R})\in\Rcal$.  We denote by $\Pcal_{\Rcal}$ the set of all $\Rcal$-robust probability distributions.

The single conditional independence statement $\CI{X_{0}}{X_{[n]\setminus R}}[X_{R}=x_R]$ means that the conditional
distributions satisfy
\begin{multline*}
  p(X_{0}=x_{0}\;|\;\Xvin=x) = p(X_{0}=x_{0}\;|\;X_{R}=x_{R}),\quad\text{ for all }x\in\Xin\text{ with }p(x)>0\\
              \text{ and }x|_{R}=x_{R} \;.
\end{multline*}
It is often convenient to use another definition that avoids the need to work with conditional distributions: The
statement $\CI{X_{0}}{X_{[n]\setminus R}}[X_{R}=x_R]$ holds if and only if
\begin{equation}
  \label{eq:elementary-CI}
  p(x_{0},x_{S},x_{R})p(x_{0}',x_{S}',x_{R}) = p(x_{0},x_{S}',x_{R})p(x_{0}',x_{S},x_{R}),
\end{equation}
for all $x_{0},x_{0}'\in\Xcal_{0}, x_{S},x_{S}'\in\Xcal_{S}$ and $x_{R}\in\Xcal_{R}$.  Here, $p(x_{0},x_{S},x_{R})$ is
an abbreviation of $p(X_{0}=x_{0},X_{S}=x_{S},X_{R}=x_{R})$.  It is not difficult to see that these two definitions of
conditional independence are equivalent.  The formulation in terms of determinantal equations is used in algebraic
statistics~\cite{DrtonSturmfelsSullivant09:Algebraic_Statistics} and will also turn out to be useful here.


A joint probability distribution $p$ can be written as a $d_{0}\times|\Xin|$-matrix.  Each
equation~\eqref{eq:elementary-CI} imposes conditions on this matrix saying that certain submatrices have rank one.  To
be precise, for any edge $(x,x')$ in the graph $G_{\Rcal}$ (defined in Section~\ref{sec:robust-canal})
equations~\eqref{eq:elementary-CI} for all $x_{0},x_{0}'\in\Xcal_{0}$ require that the submatrix
$(p_{kz})_{k\in\Xcal_{0},z\in\{x,x'\}}$ has rank one.
%
For any $x\in\Xin$ denote by $\tilde p_{x}$ the vector with components $\tilde p_{x}(x_{0}) =
\mbox{$p(X_{0}=x_{0},\Xvin=x)$}$ for $x_{0}\in\Xcal_{0}$.  Then a distribution $p$ lies in $\Pcal_{\Rcal}$ if and only
if $\tilde p_{x}$ and $\tilde p_{y}$ are proportional for all edges $(x,y)$ of~$G_{\Rcal}$.  Observe that $\tilde p_{x}$
and $\tilde p_{y}$ are proportional if and only if either (i) one of $\tilde p_{x}$ and $\tilde p_{y}$ vanishes or (ii)
$\kappa(x)=\kappa(y)$.  This observation allows to reformulate the equivalence $(1)\Leftrightarrow(3)$ of
Proposition~\ref{verb} as follows:

\begin{lemma}
  \label{lem:robustprop}
  Let $\Scal=\{x\in\Xin:\tilde p_{x}\neq 0\}$.  A distribution $p$ lies in $\Pcal_{\Rcal}$ if and only if $\tilde p_{x}$
  and $\tilde p_{y}$ are proportional whenever $x,y\in\Scal$ lie in the same connected component of~$G_{\Rcal,\Scal}$.
\end{lemma}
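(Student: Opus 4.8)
The plan is to deduce the lemma from the edge characterization established in the paragraph immediately preceding the statement, namely that $p \in \Pcal_{\Rcal}$ if and only if $\tilde p_{x}$ and $\tilde p_{y}$ are proportional for every edge $(x,y)$ of $G_{\Rcal}$, combined with the observation (also recorded there) that two such vectors are proportional precisely when one of them vanishes or $\kappa(x)=\kappa(y)$. The only extra ingredient is that proportionality, restricted to nonzero vectors, is an equivalence relation.

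First I would reduce the edge conditions on $G_{\Rcal}$ to edge conditions on the induced subgraph $G_{\Rcal,\Scal}$. If $(x,y)$ is an edge of $G_{\Rcal}$ and at least one endpoint lies outside $\Scal$, then by the definition $\Scal=\{x:\tilde p_{x}\neq 0\}$ one of $\tilde p_{x},\tilde p_{y}$ is the zero vector, so the proportionality condition holds automatically. Hence the only constraints that can genuinely fail come from edges with both endpoints in $\Scal$, which are exactly the edges of $G_{\Rcal,\Scal}$. This already shows that $p\in\Pcal_{\Rcal}$ if and only if $\tilde p_{x}$ and $\tilde p_{y}$ are proportional for every edge $(x,y)$ of $G_{\Rcal,\Scal}$.

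Next I would upgrade ``proportional along each edge'' to ``proportional within each connected component.'' For the forward direction, given $p\in\Pcal_{\Rcal}$ and two vertices $x,y$ in the same connected component of $G_{\Rcal,\Scal}$, I would choose a path $x=z_{0},z_{1},\dots,z_{m}=y$ inside $G_{\Rcal,\Scal}$; every $z_{i}$ lies in $\Scal$, so every $\tilde p_{z_{i}}$ is nonzero, and consecutive ones are proportional by the reduction above. Since proportionality of nonzero vectors is transitive, $\tilde p_{x}$ and $\tilde p_{y}$ are proportional. Conversely, if $\tilde p_{x}$ and $\tilde p_{y}$ are proportional whenever $x,y$ lie in a common connected component, then in particular this holds for the two endpoints of any edge of $G_{\Rcal,\Scal}$, so all edge conditions are satisfied and $p\in\Pcal_{\Rcal}$.

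I do not expect a genuine obstacle; the single point needing care is that transitivity of proportionality is available only because every vertex of $G_{\Rcal,\Scal}$ corresponds to a nonzero $\tilde p_{x}$, which is precisely what the choice $\Scal=\{x:\tilde p_{x}\neq 0\}$ guarantees. As a cross-check, the same statement can be read off from Proposition~\ref{verb}: on $\Scal=\supp(\pin)$ we have $\tilde p_{x}=\pin(x)\kappa(x)$ with $\pin(x)>0$, so for $x,y\in\Scal$ proportionality of $\tilde p_{x},\tilde p_{y}$ is equivalent to $\kappa(x)=\kappa(y)$, and the claim becomes exactly the equivalence $(1)\Leftrightarrow(3)$ of Proposition~\ref{verb} applied to this $\Scal$.
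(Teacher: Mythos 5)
Your proof is correct and follows essentially the same route as the paper, which presents this lemma as a reformulation of the equivalence $(1)\Leftrightarrow(3)$ of Proposition~\ref{verb} via the edge characterization and the observation that $\tilde p_{x}\propto\tilde p_{y}$ iff one vector vanishes or $\kappa(x)=\kappa(y)$; your write-up simply makes explicit the path/transitivity argument that the paper leaves implicit. The one point you rightly flag — that transitivity of proportionality requires all intermediate vectors to be nonzero, which is exactly what $\Scal=\{x:\tilde p_{x}\neq 0\}$ guarantees — is indeed the only place where care is needed.
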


For any family $\Bbf$ of subsets of $\Xin$ let $\Pcal_{\Bbf}$ be the set of probability distributions $p$ on $\Xtot$
that satisfy the following two conditions:
\begin{enumerate}
\item $\cup\Bbf = \{x\in\Xin:\tilde p_{x}\neq 0\}$,
\item $\tilde p_{x}$ and $\tilde p_{y}$ are proportional, whenever there exists $\Zcal\in\Bbf$ such that $x,y\in\Zcal$.
\end{enumerate}
Then $\Pcal_{\Rcal}=\bigcup_{\Bbf}\Pcal_{\Bbf}$, where the union is over all $\Rcal$-robustness structures~$\Bbf$.  The
disadvantage of this decomposition is that there are $\Rcal$-robustness structures $\Bbf$, $\Bbf'$ such that
$\Pcal_{\Bbf}$ is a subset of the topological closure $\ol{\Pcal_{\Bbf'}}$ of~$\Pcal_{\Bbf'}$.  In other words, each
$p\in\Pcal_{\Bbf}$ can be approximated arbitrarily well by elements of $\Pcal_{\Bbf'}$, and therefore in many cases it
suffices to only consider~$\Pcal_{\Bbf'}$.  The following definition is needed:
\begin{defi}
  An $\Rcal$-robustness structure $\Bbf$ is \emph{maximal} if and only if $\cup\Bbf:=\bigcup_{\Zcal\in\Bbf}\Zcal$
  satisfies any of the following equivalent conditions:
  \begin{enumerate}
  \item For any $x\in\Xin\setminus\cup\Bbf$ there are edges $(x,y)$, $(x,z)$ in $G_{\Rcal}$ such that $y,z\in\cup\Bbf$
    do not lie in the same connected component of $G_{\Rcal,\cup\Bbf}$.
  \item For any $x\in\Xin\setminus\cup\Bbf$ the induced subgraph $G_{\Rcal,\cup\Bbf\cup\{x\}}$ has fewer connected
    components than $G_{\Rcal,\cup\Bbf}$.
  \end{enumerate}
\end{defi}
\begin{lemma}
  \label{lem:decomposition}
  $\Pcal_{\Rcal}$ equals the disjoint union $\bigcup_{\Bbf}\Pcal_{\Bbf}$, where the union is over all 
  $\Rcal$-ro\-bust\-ness structures.  Alternatively, $\Pcal_{\Rcal}$ equals the (non-disjoint) union
  $\bigcup_{\Bbf}\ol{\Pcal_{\Bbf}}$, where the union is over all maximal $\Rcal$-robustness structures.
\end{lemma}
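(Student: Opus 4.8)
The plan is to treat the two assertions separately, deriving both from Lemma~\ref{lem:robustprop} together with the observation that $\Pcal_{\Rcal}$ is a topologically closed set.

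For the first assertion I would begin by noting that the identity $\Pcal_{\Rcal}=\bigcup_{\Bbf}\Pcal_{\Bbf}$ is exactly the content of the discussion preceding the lemma: given $p\in\Pcal_{\Rcal}$ with support $\Scal=\{x:\tilde p_{x}\neq 0\}$, the structure $\Bbf$ consisting of the connected components of $G_{\Rcal,\Scal}$ satisfies $p\in\Pcal_{\Bbf}$ by Lemma~\ref{lem:robustprop}, while conversely every $\Pcal_{\Bbf}$ is contained in $\Pcal_{\Rcal}$ by the same lemma. To upgrade this to a \emph{disjoint} union I would use that an $\Rcal$-robustness structure is completely determined by its ground set: by Definition~\ref{def:robustness-structure}, $\Bbf$ is the set of connected components of $G_{\Rcal,\cup\Bbf}$, so $\Bbf$ is recovered from $\cup\Bbf$. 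Hence if $p\in\Pcal_{\Bbf}\cap\Pcal_{\Bbf'}$, condition (1) in the definition of $\Pcal_{\Bbf}$ forces $\cup\Bbf=\{x:\tilde p_{x}\neq 0\}=\cup\Bbf'$, and therefore $\Bbf=\Bbf'$, which gives disjointness.

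For the second assertion one inclusion is immediate: each equation~\eqref{eq:elementary-CI} is polynomial in the entries of $p$, so $\Pcal_{\Rcal}$ is closed; since $\Pcal_{\Bbf}\subseteq\Pcal_{\Rcal}$ for every $\Bbf$, also $\ol{\Pcal_{\Bbf}}\subseteq\Pcal_{\Rcal}$, and in particular the union over maximal $\Bbf$ is contained in $\Pcal_{\Rcal}$. For the reverse inclusion I would show that every $p\in\Pcal_{\Rcal}$ lies in $\ol{\Pcal_{\Bbf}}$ for some maximal $\Bbf$. Starting from $\Bbf_{0}$, the components of $G_{\Rcal,\Scal}$ with $\Scal=\supp(p)$, I would enlarge the ground set greedily: as long as the current structure is not maximal, there is a vertex $x\notin\cup\Bbf_{i}$ whose addition does \emph{not} decrease the number of connected components, i.e.\ $x$ is adjacent in $G_{\Rcal}$ to at most one current component. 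Since $\Xin$ is finite this process terminates at a maximal structure $\Bbf$, and by construction no two components ever merge. Consequently the trace on $\Scal$ of the partition stays equal to $\Bbf_{0}$ throughout, so each block $\Zcal\in\Bbf$ meets $\Scal$ inside a single block of $\Bbf_{0}$; hence all support vectors $\tilde p_{z}$, $z\in\Zcal\cap\Scal$, share one proportionality direction~$v_{\Zcal}$.

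The final step is a limiting argument: for $\epsilon>0$ define $p_{\epsilon}$ by keeping the vectors $\tilde p_{z}$ (up to normalization) on $\Scal$ and placing $\epsilon\, v_{\Zcal}$ on each newly added vertex of the block $\Zcal$, where $v_{\Zcal}$ is the common direction of $\Zcal\cap\Scal$ when this set is nonempty and an arbitrary fixed nonzero vector otherwise. After normalization $p_{\epsilon}\in\Pcal_{\Bbf}$, since its support is exactly $\cup\Bbf$ and the vectors within each block are proportional to the single direction $v_{\Zcal}$, and $p_{\epsilon}\to p$ as $\epsilon\to 0$; hence $p\in\ol{\Pcal_{\Bbf}}$. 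The main obstacle is precisely the bookkeeping in this construction: one must verify that ``not reducing the number of components'' is the same as ``not merging two blocks'', which is what guarantees the existence of a common direction $v_{\Zcal}$ and hence that the approximants really satisfy the proportionality condition defining $\Pcal_{\Bbf}$. Once this is in place the passage to the limit is routine.
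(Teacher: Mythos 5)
Your proposal is correct and follows essentially the same route as the paper: the first assertion is read off from the discussion preceding the lemma (with disjointness forced by condition (1) in the definition of $\Pcal_{\Bbf}$), and the second is proved by enlarging a non-maximal structure using vertices whose addition does not merge components, then exhibiting perturbations $p_{\epsilon}\in\Pcal_{\Bbf}$ with $p_{\epsilon}\to p$. The only (cosmetic) difference is that you perform the enlargement greedily in one pass and take a single limit, whereas the paper adds one vertex at a time and iterates the approximation argument.
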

\begin{proof}
  The first statement follows directly from the above considerations.  To see that it suffices to take maximal
  $\Rcal$-robustness structures in the second decomposition, consider an $\Rcal$-robustness structure $\Bbf$ that is not
  maximal.  By definition there exists $x\in\Xin\setminus\cup\Bbf$ such that the induced subgraph
  $G_{\Rcal,\cup\Bbf\cup\{x\}}$ has at least as many connected components as $G_{\Rcal,\cup\Bbf}$.
  Let $\Bbf'$ be the family of connected components of $G_{\Rcal,\cup\Bbf\cup\{x\}}$.  If $G_{\Rcal,\cup\Bbf\cup\{x\}}$ has the same number of connected components as $G_{\Rcal,\cup\Bbf}$, then there is
  $\Ycal\in\Bbf$ such that $\Ycal\cup\{x\}\in\Bbf'$, otherwise let $\Ycal\in\Bbf$ be arbitrary.  Let $y\in\Ycal$.  For any $p\in\Pcal_{\Bbf}$ and $\epsilon>0$
  define a probability distribution $p_{\epsilon}$ via
  \begin{equation*}
    p_{\epsilon}(x_{0},z) =
    \begin{cases}
      p(x_{0},z), & \quad\text{ if }z\notin\{x,y\}, \\
      (1-\epsilon)p(x_{0},x), & \quad\text{ if }z=y, \\
      \epsilon p(x_{0},x), & \quad\text{ if }z=x.
    \end{cases}
  \end{equation*}
  Then $p_{\epsilon}\in\Pcal_{\Bbf'}$, and hence $\Pcal_{\Bbf}\subseteq\ol{\Pcal_{\Bbf'}}$.  If $\Bbf'$ is not maximal,
  we may iterate the process.
\end{proof}



The following lemma sheds light on the structure of $\ol{\Pcal_{\Bbf}}$:
\begin{lemma}
  \label{lem:genconst}
  Fix an $\Rcal$-robustness structure $\Bbf$.  Then $\ol{\Pcal_{\Bbf}}$ consists of all probability measures of
  the form
  \begin{equation}
    \label{eq:genconst}
    p(X_{0}=x_{0}, \Xvin=x) =
    \begin{cases}
      \mu(\Zcal)\lambda_{\Zcal}(x) p_{\Zcal}(x_{0}), & \text{ if }x\in \Zcal\in\Bbf,
      \\
      0,                       & \text{ if }x\in\Xin\setminus\cup\Bbf,
    \end{cases}
  \end{equation}
  where $\mu$ is a probability distribution on $\Bbf$ and $\lambda_{\Zcal}$ is a probability distribution on $\Zcal$ for
  each $\Zcal\in\Bbf$ and $(p_{\Zcal})_{\Zcal\in\Bbf}$ is a family of probability distributions on $\Xcal_{0}$.
\end{lemma}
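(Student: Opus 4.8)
The plan is to exhibit~\eqref{eq:genconst} as the image of a continuous map on a compact parameter space, and to identify $\Pcal_{\Bbf}$ with the image of a dense subset of that space. Let $\Theta$ be the parameter space consisting of all tuples $(\mu,(\lambda_{\Zcal})_{\Zcal\in\Bbf},(p_{\Zcal})_{\Zcal\in\Bbf})$, where $\mu$ is a probability distribution on the finite set $\Bbf$, each $\lambda_{\Zcal}$ is a probability distribution on $\Zcal$, and each $p_{\Zcal}$ is a probability distribution on~$\Xcal_{0}$. Being a finite product of probability simplices, $\Theta$ is compact. Let $\Phi\colon\Theta\to\Rb^{\Xtot}$ be the map sending a parameter tuple to the measure defined by~\eqref{eq:genconst}. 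Each coordinate $\Phi(\theta)(x_{0},x)$ is a monomial in the entries of $\theta$, so $\Phi$ is continuous, and a short computation gives $\sum_{x_{0},x}\Phi(\theta)(x_{0},x)=\sum_{\Zcal}\mu(\Zcal)=1$, so $\Phi$ lands in the probability distributions on~$\Xtot$. By construction the image $Q:=\Phi(\Theta)$ is precisely the set of measures described in the statement, and since $\Theta$ is compact and $\Phi$ is continuous, $Q$ is compact, hence closed.

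Next I would identify $\Pcal_{\Bbf}$ with $\Phi(U)$, where $U\subseteq\Theta$ is the subset on which $\mu(\Zcal)>0$ for all $\Zcal$ and $\lambda_{\Zcal}(x)>0$ for all $\Zcal$ and all $x\in\Zcal$ (the $p_{\Zcal}$ being unconstrained). For the inclusion $\Phi(U)\subseteq\Pcal_{\Bbf}$: if $\theta\in U$ and $p=\Phi(\theta)$, then for $x\in\Zcal$ one has $\tilde p_{x}=\mu(\Zcal)\lambda_{\Zcal}(x)\,p_{\Zcal}$, a positive multiple of $p_{\Zcal}$; hence $\tilde p_{x}$ and $\tilde p_{y}$ are proportional whenever $x,y$ lie in a common block, and $\tilde p_{x}\neq 0$ exactly for $x\in\cup\Bbf$, so both defining conditions of $\Pcal_{\Bbf}$ hold. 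For the reverse inclusion, given $p\in\Pcal_{\Bbf}$ and a block $\Zcal$, I use that the nonzero vectors $\{\tilde p_{x}:x\in\Zcal\}$ are pairwise proportional: fixing one of them and normalizing its entries to sum to one yields a probability distribution $p_{\Zcal}$ on $\Xcal_{0}$ with $\tilde p_{x}=c_{x}p_{\Zcal}$ and $c_{x}=\sum_{x_{0}}\tilde p_{x}(x_{0})>0$ for every $x\in\Zcal$. Setting $\mu(\Zcal)=\sum_{x\in\Zcal}c_{x}$ and $\lambda_{\Zcal}(x)=c_{x}/\mu(\Zcal)$ produces a tuple in $U$ with $\Phi(\theta)=p$; the normalization $\sum_{\Zcal}\mu(\Zcal)=\sum_{x\in\cup\Bbf}c_{x}=1$ holds because $p$ is a probability distribution supported on~$\cup\Bbf$.

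Finally, $U$ is dense in $\Theta$: the distributions with full support form a dense subset (the relative interior) of each simplex, so the strict positivity defining $U$ can be achieved by an arbitrarily small perturbation of any tuple. With density in hand the conclusion is purely topological: continuity of $\Phi$ gives $\Phi(\ol U)\subseteq\ol{\Phi(U)}$, while compactness of $\ol U=\Theta$ makes $\Phi(\ol U)=Q$ closed, so $\ol{\Phi(U)}\subseteq\Phi(\ol U)$; combining, $\ol{\Pcal_{\Bbf}}=\ol{\Phi(U)}=\Phi(\Theta)=Q$, which is the claim. I expect the only delicate point to be the reverse inclusion $\Pcal_{\Bbf}\subseteq\Phi(U)$, namely checking that the block-wise proportionality of the $\tilde p_{x}$ packages consistently into $(\mu,(\lambda_{\Zcal}),(p_{\Zcal}))$ with the correct normalizations; the density and closure steps are then routine.
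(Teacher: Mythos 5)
Your proof is correct and follows essentially the same route as the paper: the parametrization $p(x_{0},x)=\mu(\Zcal)\lambda_{\Zcal}(x)p_{\Zcal}(x_{0})$ obtained by conditioning on the block, on the input within the block, and on the output, together with the observation that block-wise proportionality of the vectors $\tilde p_{x}$ lets one recover $(\mu,\lambda_{\Zcal},p_{\Zcal})$ from any $p\in\Pcal_{\Bbf}$. The paper's own proof is terser and in fact glosses over the passage to the closure $\ol{\Pcal_{\Bbf}}$; your compactness-of-$\Theta$ plus density-of-$U$ argument supplies exactly that missing topological step, so your version is if anything the more complete one.
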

\begin{proof}
  It is easy to see that~\eqref{eq:genconst} defines indeed a probability distribution.  By Lemma~\ref{lem:robustprop}
  it belongs to $\Pcal_{\Bbf}$.  In the other direction, any probability measure 
  can be written as a product
  \begin{equation*}
    p(x_{0},x_{1},\dots,x_{n}) = p(\Zcal) p\left(x_{1},\dots,x_{n}\middle|(X_{1},\dots,X_{n})\in \Zcal\right) p(x_{0}|x_{1},\dots,x_{n}),
  \end{equation*}
  if $(x_{1},\dots,x_{n})\in \Zcal\in\Bbf$, and if $p$ is an $\Rcal$-robust probability distribution, then
  $p_{\Zcal}(x_{0}):=p(x_{0}|x_{1},\dots,x_{n})$ depends only on the block $\Zcal$ in which $(x_{1},\dots,x_{n})$ lies.
\end{proof}

Lemma~\ref{lem:decomposition} decomposes the set $\Pcal_{\Rcal}$ of robust probability distributions into the closures
of the smooth manifolds~$\Pcal_{\Bbf}$, where $\Bbf$ runs over the maximal $\Rcal$-robustness structures.
Lemma~\ref{lem:genconst} gives natural parametrizations of these manifolds.

By comparison, Theorem~\ref{thm:rob-kernel-means} and
Lemma~\ref{lem:Ktilde-and-K} 
describe robustness from a different point of view.
The result can be translated to the setting of this section as follows:
\begin{cor}
  \label{cor:p-n-kappa}
  Suppose that $\Rcal$ is a coherent and saturated robustness structure, and define $\Delta$ as in
  Section~\ref{sec:robustn-mark-kern}.  If $p\in\Pcal_{\Bbf}$, then there exists a stochastic map $\tilde\kappa\in
  K_{\Delta}$ such that $p(x_{0}|x) = \tilde\kappa(x; x_{0})$ for all~$x\in\cup\Bbf$.
\end{cor}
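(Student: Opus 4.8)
The plan is to reduce the statement to Corollary~\ref{cor:interaction-stochmap} by extracting from $p$ the stochastic map it induces on its support. First I would set $\Scal := \cup\Bbf$ and, for each $x\in\Scal$ (where by condition~(1) of the defining properties of $\Pcal_{\Bbf}$ we have $\tilde p_{x}\neq 0$), define the conditional distribution $\kappa(x; x_{0}) := p(X_{0}=x_{0}\mid\Xvin=x) = \tilde p_{x}(x_{0})/\sum_{x_{0}'}\tilde p_{x}(x_{0}')$. On $\Xin\setminus\Scal$ the value of $\kappa$ is irrelevant to the conclusion and may be chosen arbitrarily (for instance strictly positive), so that $\kappa$ becomes a genuine stochastic map from $\Xin$ to $\Xcal_{0}$.

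The key step is to verify that this $\kappa$ is $\Rcal$-robust on $\Scal$. By condition~(2) defining $\Pcal_{\Bbf}$, the vectors $\tilde p_{x}$ and $\tilde p_{y}$ are proportional whenever $x,y$ lie in a common block $\Zcal\in\Bbf$. Since $\tilde p_{x}=\pin(x)\,\kappa(x)$ with both $\tilde p_{x},\tilde p_{y}$ nonzero on $\Scal$, proportionality of these vectors is equivalent to equality of the probability vectors $\kappa(x)=\kappa(y)$. As $\Bbf$ is by definition the set of connected components of $G_{\Rcal,\Scal}$, this says precisely that $\kappa$ is constant on each connected component of $G_{\Rcal,\Scal}$; by the equivalence $(3)\Leftrightarrow(1)$ of Proposition~\ref{verb}, $\kappa$ is $\Rcal$-robust on $\Scal$.

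With $\Rcal$-robustness in hand, I would invoke Corollary~\ref{cor:interaction-stochmap} for the map $\kappa$ and the set $\Scal$: since $\Rcal$ is coherent and saturated, there exists $\tilde\kappa$ in the closure of $K_{\Delta}$ with $\kappa(x)=\tilde\kappa(x)$ for all $x\in\Scal$. Because $p(X_{0}=x_{0}\mid\Xvin=x)=\kappa(x;x_{0})$ for every $x\in\cup\Bbf=\Scal$, this immediately yields $p(x_{0}\mid x)=\tilde\kappa(x;x_{0})$ on $\cup\Bbf$, as required. No computation beyond that already contained in Corollary~\ref{cor:interaction-stochmap} is needed, since that result packages the passage through Theorem~\ref{thm:rob-kernel-means} and Lemma~\ref{lem:Ktilde-and-K}.

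The one delicate point, which I expect to be the main obstacle to a literal reading of the statement, is the distinction between $K_{\Delta}$ and its closure. The conditional $p(\cdot\mid x)$ is well defined on $\Scal$ but may have individual zero entries even where $\tilde p_{x}\neq 0$, so the matching map $\tilde\kappa$ can in general only be guaranteed to lie in $\ol{K_{\Delta}}$ rather than in $K_{\Delta}$ itself; this matches the formulation of Corollary~\ref{cor:interaction-stochmap}. I would therefore either state the conclusion with $\ol{K_{\Delta}}$, or, under the additional hypothesis that $p(x_{0},x)>0$ for all $x\in\cup\Bbf$ and $x_{0}\in\Xcal_{0}$, note that strict positivity of the conditional places $\tilde\kappa$ in $K_{\Delta}$ proper.
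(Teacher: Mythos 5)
Your proof is correct and follows essentially the same route as the paper, which states this corollary without a separate proof, presenting it as a direct ``translation'' of Theorem~\ref{thm:rob-kernel-means} and Corollary~\ref{cor:interaction-stochmap}: pass to the conditional kernel $\kappa(x;x_{0})=p(x_{0}|x)$ on $\cup\Bbf$, check $\Rcal$-robustness on $\cup\Bbf$ via Proposition~\ref{verb}, and invoke Corollary~\ref{cor:interaction-stochmap}. Your closing observation is also apt: since Corollary~\ref{cor:interaction-stochmap} only places $\tilde\kappa$ in the closure of $K_{\Delta}$, and the conditionals of $p\in\Pcal_{\Bbf}$ may have zero entries even where $\tilde p_{x}\neq 0$, the conclusion should strictly speaking be stated with $\ol{K_{\Delta}}$ (or under an added positivity hypothesis) --- a slight imprecision in the paper's formulation rather than a gap in your argument.
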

In the statement of the corollary note that $p(\Xin=x)>0$ for all~$x\in\cup\Bbf$, and hence the conditional
distribution $p(x_{0}|x)$ is well-defined in this case.

Corollary~\ref{cor:p-n-kappa} can also be viewed from the perspective of hierarchical models: Let $\tilde\Delta=\{\{1,\dots,n\}\} \cup
\{ S\cup\{0\}\,:\,S\in\Delta\}$.  
The \emph{hierarchical loglinear model} $\Ecal_{\tilde\Delta}$ consists of all probability distributions $p$ on $\Xtot$
of the form
\begin{equation*}
  \log(p(x)) = \sum_{A\subseteq\tilde\Delta}\tilde\phi_{A}(x|_{A}),
\end{equation*}
where $\phi_{A}$ is a real function with domain~$\Xcal_{A}$.  By the results of this section, $\Ecal_{\tilde\Delta}$ is
a smooth manifold containing $\Pcal_{\Rcal}$ in its closure.
See~\cite{Lauritzen96:Graphical_Models,DrtonSturmfelsSullivant09:Algebraic_Statistics} for more on hierarchical
loglinear models.

\begin{rem}
  \label{rem:algebra}
  It is also possible to derive the decomposition in Lemma~\ref{lem:decomposition} from results from commutative
  algebra.  Since the equations~\eqref{eq:elementary-CI} that describe conditional independence are algebraic, they
  generate a polynomial ideal, called \emph{conditional independence ideal}.  In this case the ideal is a
  \emph{generalized binomial edge ideal}, as defined in~\cite{Rauh12:Binomial_edge_ideals}.  For such ideals, the
  primary decomposition is known and corresponds precisely to the decomposition of the set of robust distributions as
  presented in Lemma~\ref{lem:decomposition}.
  The parametrization of Lemma~\ref{lem:genconst} can be considered as a surjective polynomial map and shows that all
  components of the decomposition are rational.
\end{rem}

\section{$k$-robustness} 
\label{sec:k-robustness}

In this section we consider the symmetric case~$\Rcal=\Rcal_{k}$.  As above, we replace any prefix or subscript $\Rcal$
by $k$.

If $k=0$, then any pair $(x,y)$ is an edge in $G_{0}$.  This means that any $0$-robustness structure $\Bbf$ contains
only one set.  There is only one maximal $0$-robustness structure, namely $\overline{\Bbf}=\{\Xin\}$.  The set
$\Rcal_{0}$ is irreducible.  This corresponds to the fact that $\Pcal_{0}$ is defined by $\CI{X_{0}}{\Xvin}$.

$\overline{\Bbf}$ is actually a maximal $k$-robustness structure for any $0\le k< n$.  This illustrates the fact
that the single CI statement $\CI{X_{0}}{\Xvin}$ implies all other CI statements of the
form~$\CI{X_{0}}{X_{[n]\setminus R}}[X_{R}=x_{R}]$. 
The corresponding set $\Pcal_{\overline{\Bbf}}$ contains all probability distributions of $\Pcal_{k}$ of full support.

Now let $k=1$.  In the case $n=2$ we obtain results by Alexander Fink, which can be reformulated as
follows~\cite{Fink11:Binomial_ideal_of_intersection_axiom}:
%
\emph{  Let $n=2$.  A $1$-robustness structure $\Bbf$ is maximal if and only if the following statements hold:
  \begin{itemize}
  \item Each $B\in\Bbf$ is of the form $B=S_{1}\times S_{2}$, where $S_{1}\subseteq \Xcal_{1}, S_{2}\subseteq\Xcal_{2}$.
  \item For every $x_{1}\in\Xcal_{1}$ there exists $B\in\Bbf$ and $x_{2}\in\Xcal_{2}$ such that $(x_{1},x_{2})\in B$,
    and conversely.
  \end{itemize}
}

In~\cite{Fink11:Binomial_ideal_of_intersection_axiom} a different description is given: The block $S_{1}\times S_{2}$ can
be identified with the complete bipartite graph on $S_{1}$ and $S_{2}$.  In this way, every maximal $1$-robustness
structure corresponds to a collection of complete bipartite subgraphs with vertices in $\Xcal_{1}\cup\Xcal_{2}$ such that
every vertex in $\Xcal_{1}$ and~$\Xcal_{2}$, respectively, is part of one such subgraph.  Figure~\ref{fig:Fink-example}
shows an example.
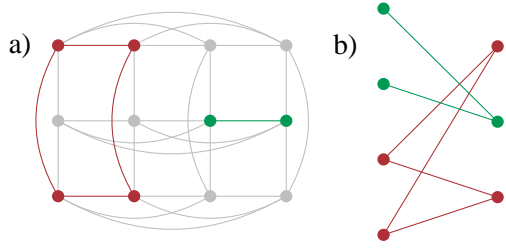
\begin{figure}
  \centering
  \begin{tikzpicture}
    \foreach \x in {0,...,3} { 
      \foreach \y in {0,1,2} { 
        \path (\x,\y) coordinate (X-\x-\y);
      }
    }
    \begin{scope}[lightgray]
      \foreach \x in {0,...,3} {
        \draw (X-\x-0) -- (X-\x-1) -- (X-\x-2);
      }
      \draw (X-1-0) -- (X-2-0) -- (X-3-0);
      \draw (X-0-1) -- (X-1-1) -- (X-2-1);
      \draw (X-1-2) -- (X-2-2) -- (X-3-2);
      \draw (X-2-0) to [out=120,in=-120] (X-2-2);
      \draw (X-3-0) to [out=60,in=-60] (X-3-2);
      \draw (X-0-0) to [out=-30,in=-150] (X-2-0);
      \draw (X-1-0) to [out=-30,in=-150] (X-3-0);
      \draw (X-0-0) to [out=-30,in=-150] (X-3-0);
      \draw (X-0-1) to [out=-30,in=-150] (X-2-1);
      \draw (X-1-1) to [out=-30,in=-150] (X-3-1);
      \draw (X-0-1) to [out=-30,in=-150] (X-3-1);
      \draw (X-0-2) to [out=30,in=150] (X-2-2);
      \draw (X-1-2) to [out=30,in=150] (X-3-2);
      \draw (X-0-2) to [out=30,in=150] (X-3-2);
      \foreach \i in {2-0,3-0,2-2,3-2,0-1,1-1} { \fill (X-\i) circle (2.3pt); }
    \end{scope}
    \draw[Maroon] (X-0-0) -- (X-1-0) to [out=120,in=-120] (X-1-2) -- (X-0-2) to [out=-120,in=120] (X-0-0);
    \draw[ForestGreen] (X-2-1) -- (X-3-1);
    \foreach \i in {0-0,1-0,0-2,1-2} { \fill[Maroon] (X-\i) circle (2.3pt); }
    \foreach \i in {2-1,3-1} { \fill[ForestGreen] (X-\i) circle (2.3pt); }
    \node at (-0.5,2) { a) };
  \end{tikzpicture}
  \begin{tikzpicture}
    \path (0,0) coordinate (X0);
    \path (0,1) coordinate (X1);
    \path (0,2) coordinate (X2);
    \path (0,3) coordinate (X3);
    \path (1.5,0.5) coordinate (Y0);
    \path (1.5,1.5) coordinate (Y1);
    \path (1.5,2.5) coordinate (Y2);
    \foreach \i in {0,1} {
      \foreach \j in {0,2} { \draw[Maroon] (X\i) -- (Y\j); } }
    \draw[ForestGreen] (X2) -- (Y1) -- (X3);
    \foreach \i in {0,1} { \fill[Maroon] (X\i) circle (2.3pt); }
    \foreach \i in {2,3} { \fill[ForestGreen] (X\i) circle (2.3pt); }
    \foreach \i in {0,2} { \fill[Maroon] (Y\i) circle (2.3pt); }
    \foreach \i in {1} { \fill[ForestGreen] (Y\i) circle (2.3pt); }
    \node at (-0.5,2.5) { b) };    
  \end{tikzpicture}
  \caption{A 1-robustness structures for two variables. a) The graph $G_{1,\Scal}$.  b) The representation in terms of
    bipartite graphs.}
  \label{fig:Fink-example}
\end{figure}

This result generalizes in the following way:
\begin{lemma}
  A $1$-robustness structure $\Bbf$ is maximal if and only if the following statements hold:
  \begin{itemize}
  \item Each $B\in\Bbf$ is of the form $B=S_{1}\times\dots\times S_{n}$, where $S_{i}\subseteq\Xcal_{i}$.
  \item $\bigcup_{S_{1}\times\dots\times S_{n}\in\Bbf}S_{i}=\Xcal_{i}$ for all $i\in[n]$
  \end{itemize}
\end{lemma}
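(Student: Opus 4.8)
The plan is to exploit the very simple structure of the graph $G_{\Rcal_1}$. Since $\Rcal_1$ contains every pair $(\{i\},x_{i})$, two distinct points $x,y\in\Xin$ are adjacent in $G_{\Rcal_1}$ exactly when they agree in at least one coordinate, i.e.\ $x_{i}=y_{i}$ for some $i\in[n]$. Throughout I would write $\Scal=\cup\Bbf$, denote by $\pi_{i}(B)=\{x_{i}:x\in B\}$ the projection of a block $B\in\Bbf$ onto coordinate $i$, and set $\hat B:=\pi_{1}(B)\times\dots\times\pi_{n}(B)$, the smallest box containing $B$. I will use the maximality criterion in the form of condition~(2): $\Bbf$ is maximal iff adjoining any $x\in\Xin\setminus\Scal$ strictly lowers the number of connected components of the induced subgraph, equivalently (condition~(1)) iff every such $x$ is adjacent to points in two different blocks. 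The technical heart, which I would isolate as a small claim, is a \emph{coordinate-domination principle}: if $x\in\hat B$, then every block adjacent to $x$ is already adjacent to $B$. Indeed, if $y\in B'$ agrees with $x$ in coordinate $j$, then since $x_{j}\in\pi_{j}(B)$ there is $z\in B$ with $z_{j}=x_{j}=y_{j}$, so $B'$ and $B$ are adjacent and hence equal, the blocks being distinct connected components.

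For the direction \emph{maximal $\Rightarrow$ boxes}, I would argue by contradiction: if some block $B$ is not a box, pick $x\in\hat B\setminus B$. Since $x_{i}\in\pi_{i}(B)$ for every $i$, it agrees with points of $B$ and is adjacent to $B$, so it cannot lie in a different block; as it is not in $B$ either, $x\notin\Scal$. By the coordinate-domination principle $x$ is adjacent to $B$ alone, so adjoining it merely enlarges $B$'s component and keeps the component count fixed, contradicting maximality; hence $B=\hat B$ is a box. For the cover condition, suppose a value $a\in\Xcal_{i}$ occurs in no point of $\Scal$. For any block $B$ and any $b\in B$, define $x$ by $x_{i}=a$ and $x_{j}=b_{j}$ for $j\neq i$; then $x\notin\Scal$ and $x$ agrees with $b$ off coordinate $i$, so $x$ is adjacent to $B$. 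Any further adjacency must occur in some $j\neq i$ (as $a$ appears in no $\pi_{i}(B')$) and would force the partner block to equal $B$ by the same cancellation. Thus $x$ is again adjacent only to $B$, contradicting maximality, so $\bigcup_{B}\pi_{i}(B)=\Xcal_{i}$.

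For the converse \emph{boxes $+$ cover $\Rightarrow$ maximal}, take $x\in\Xin\setminus\Scal$. For each coordinate $i$ the cover condition yields a block $B_{i}$ with $x_{i}\in\pi_{i}(B_{i})$, so $x$ is adjacent to $B_{i}$. If all the $B_{i}$ were one block $B$, then $x_{i}\in\pi_{i}(B)$ for all $i$, giving $x\in\hat B=B$, contradicting $x\notin\Scal$. Hence $x$ is adjacent to at least two distinct blocks, so adjoining $x$ merges two components and strictly decreases their number. This is precisely condition~(2), so $\Bbf$ is maximal.

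The only genuinely delicate point I anticipate is the \emph{maximal $\Rightarrow$ boxes} step: one must be certain that the missing points $\hat B\setminus B$ fall outside $\Scal$ and attach to no block other than $B$, which is exactly what the coordinate-domination principle secures; the rest is bookkeeping with the agree-in-one-coordinate description of the edges. I would also note the degenerate case $n=1$ separately: there adjacency means equality, so every block is a singleton (trivially a box) and the cover condition forces $\Scal=\Xcal_{1}$, the generic construction above being vacuous.
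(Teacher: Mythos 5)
Your proof is correct and takes essentially the same route as the paper's: both directions rest on the same observations (a point of $\hat B\setminus B$, or one using an uncovered letter, attaches only to $B$ because distinct components can share no coordinate value, and conversely an uncovered $x$ must touch two blocks since otherwise it would lie in a single block's box). Your ``coordinate-domination principle'' is just an explicit packaging of the disjointness-of-projections fact the paper uses, and your one-point-at-a-time contradiction is equivalent to the paper's replacement of $\Ycal$ by $\Ycal'$.
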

\begin{proof}
  Suppose that $\Bbf$ is maximal.  Let $\Ycal\in\Bbf$ and let $S_{i}$ be the projection of $\Ycal\subset\Xin$
  to~$\Xcal_{i}$.  Let $\Ycal'=S_{1}\times\dots\times S_{n}$.  Then $\Ycal\subseteq \Ycal'$.  We claim that
  $(\Bbf\setminus\{\Ycal\})\cup\{\Ycal'\}$ is another 1-robustness structure with the same number of components
  as~$\Bbf$, and by maximality we can conclude $\Ycal=\Ycal'$.
  By Definition~\ref{def:robustness-structure} we need to show that $G_{\Rcal,\Ycal'}$ is connected and that
  $G_{\Rcal,\Zcal\cup \Ycal'}$ is not connected for all $\Zcal\in\Bbf\setminus\{\Ycal\}$.  The first condition follows
  from the fact that $G_{\Rcal,\Ycal}$ is connected.
  For the second condition assume to the contrary that there are $x\in \Ycal'$ and $y\in \Zcal$ such that
  $x=(x_{1},\dots,x_{n})$ and $y = (y_{1},\dots,y_{n})$ disagree in at most $n-1$ components.  Then there exists a
  common component $x_{l}=y_{l}$.  By construction there exists $z=(z_{1},\dots,z_{n})\in \Ycal$ such that
  $z_{l}=y_{l}=x_{l}$, hence $\Ycal\cup \Zcal$ is connected, in contradiction to the assumptions.  This shows that each $\Ycal$ has
  a product structure.

  Write $\Ycal=S^{\Ycal}_{1}\times\dots\times S^{\Ycal}_{n}$ for each $\Ycal\in\Bbf$.  Obviously $S^{\Ycal}_{i}\cap
  S^{\Zcal}_{i}=\emptyset$ for all $i\in[n]$ and all $\Ycal,\Zcal\in\Bbf$ if $\Ycal\neq\Zcal$.  For the second
  assertion, assume to the contrary that $l\in\Xcal_{i}$ is contained in no~$S^{\Ycal}_{i}$.  Take any $\Ycal\in\Bbf$
  and define $\Ycal':= S^{\Ycal}_{1}\times\dots\times (S^{\Ycal}_{i}\cup\{l\})\times\dots\times S^{\Ycal}_{n}$.  Then
  $\left(\Bbf\setminus\{\Ycal\}\right)\cup\{\Ycal'\}$ is another $1$-robustness structure with the same number of
  components as~$\Bbf$, contradicting the assumptions.

  Conversely, assume that $\Bbf$ is a $1$-robustness structure satisfying the two assertions of the theorem.  For any
  $x\in\Xin\setminus\cup\Bbf$ there exist $y_{1},\dots,y_{n}\in\cup\Bbf$ such that $x_{1}=y_{1}$,\dots,$x_{n}=y_{n}$.
  Since $x\notin\cup\Bbf$ the points $y_{1},\dots,y_{n}$ cannot all belong to the same block of~$\Bbf$.  If $y_{i}$ and
  $y_{j}$ belong to different blocks of~$\Bbf$, then the two edges $(x,y_{i})$ and $(x,y_{j})$ of $G_{1}$ show that
  $\Bbf$ is maximal.
\end{proof}

The last result can be reformulated in terms of $n$-partite graphs
generalizing~\cite{Fink11:Binomial_ideal_of_intersection_axiom}: Namely, the $1$-robustness structures are in one-to-one
relation with the $n$-partite subgraphs of $M_{d_{1},\dots,d_{n}}$ such that every connected component is itself a
complete $n$-partite subgraph $M_{e_{1},\dots,e_{n}}$ with $e_{i}>0$ for all $i\in[n]$.  Here, an $n$-partite graph is a
graph which can be coloured by $n$ colours such that no two vertices with the same colour are adjacent.

Unfortunately the nice product form of the maximal $1$-robustness structures does not generalize to $k>1$:
\begin{ex}[Binary inputs]
  \label{ex:bin-inputs}
  If $n=3$ and $d_{1}=d_{2}=d_{3}=2$, then the graph $G_{2}$ is the graph of the cube. For a maximal 2-robustness
  structure $\Bbf$ the set $\Xin\setminus\cup\Bbf$ can be any one of
  the following (see Fig.~\ref{fig:bin-3-inputs}): 
  \begin{itemize}
  \item The empty set
  \item A set of cardinality 4 corresponding to a plane leaving two connected components of size 2
  \item A set of cardinality 4 containing all vertices with the same parity.
  \item A set of cardinality 3 cutting off a vertex.
  \end{itemize}
  In the last case
  only the isolated vertex has a product structure (Fig.~\ref{fig:3-robustness}d).

  If $n=4$ and $d_{1}=d_{2}=d_{3}=d_{4}=2$, then the graph $G_{3}$ is the graph of a hyper-cube.
  Figure~\ref{fig:3-robustness} shows how a maximal 3-robustness structure can look like.
\end{ex}
\begin{figure}
  \centering

  \begin{tikzpicture}[scale=0.5]
    \path (1,0) coordinate (X0);
    \path (3,0) coordinate (X1);
    \path (1,2) coordinate (X2);
    \path (3,2) coordinate (X3);
    \path (1.5,0.3) coordinate (X4);
    \path (3.5,0.3) coordinate (X5);
    \path (1.5,2.3) coordinate (X6);
    \path (3.5,2.3) coordinate (X7);
    \draw (X0) -- (X1) -- (X3) -- (X2) -- (X0) -- (X4) -- (X5) -- (X7) -- (X6) -- (X4);
    \draw (X3) -- (X7);
    \draw (X1) -- (X5);
    \draw (X2) -- (X6);
    \foreach \i in {0,...,7} { \fill (X\i) circle (4.6pt); }
    \node at (0.3,2) { a) };
  \end{tikzpicture}
  $ $
  \begin{tikzpicture}
    \begin{scope}[lightgray]
      \draw (X0) -- (X1) -- (X3) -- (X2) -- (X0) -- (X4) -- (X5) -- (X7) -- (X6) -- (X4);
      \draw (X3) -- (X7);
      \foreach \i in {0,3,4,7} { \fill (X\i) circle (2.3pt); }
    \end{scope}
    \draw (X1) -- (X5);
    \draw (X2) -- (X6);
    \foreach \i in {1,2,5,6} { \fill (X\i) circle (2.3pt); }
    \node at (0.15,1) { b) };
  \end{tikzpicture}
  $ $
  \begin{tikzpicture}
    \begin{scope}[lightgray]
      \draw (X0) -- (X1) -- (X3) -- (X2) -- (X0) -- (X4) -- (X5) -- (X7) -- (X6) -- (X4);
      \draw (X3) -- (X7);
      \draw (X1) -- (X5);
      \draw (X2) -- (X6);
      \foreach \i in {0,3,5,6} { \fill (X\i) circle (2.3pt); }
    \end{scope}
    \foreach \i in {1,2,4,7} { \fill (X\i) circle (2.3pt); }    
    \node at (0.15,1) { c) };
  \end{tikzpicture}
  $ $
  \begin{tikzpicture}
    \begin{scope}[lightgray]
      \draw (X1) -- (X3) -- (X2);
      \draw (X5) -- (X7) -- (X6) -- (X4) -- (X5);
      \draw (X3) -- (X7);
      \draw (X1) -- (X5);
      \draw (X2) -- (X6);
      \foreach \i in {3,5,6} { \fill (X\i) circle (2.3pt); }
    \end{scope}
    \draw (X1) -- (X0) -- (X2);
    \draw (X0) -- (X4);
    \foreach \i in {0,1,2,4,7} { \fill (X\i) circle (2.3pt); }    
    \node at (0.15,1) { d) };
  \end{tikzpicture}

  \caption{The four symmetry classes of maximal 2-robustness structures of three binary inputs, see
    Example~\ref{ex:bin-inputs}.}
  \label{fig:bin-3-inputs}
\end{figure}
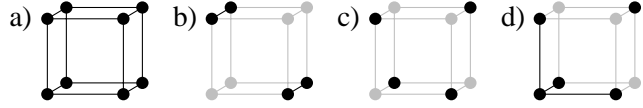
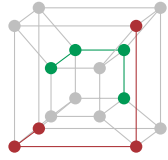
\begin{figure}
  \centering
  \begin{tikzpicture}[scale=0.8]
    \fourcubevert
    \begin{scope}[ForestGreen]
      \draw (X13) -- (X15) -- (X14) -- (X10);
    \end{scope}
    \begin{scope}[lightgray]
      \draw (X2) -- (X0);
      \draw (X4) -- (X5) -- (X7) -- (X6) -- (X4);
      \draw (X3) -- (X7);
      \draw (X1) -- (X5);
      \draw (X2) -- (X6);
      \draw (X8) -- (X9) -- (X11) -- (X10) -- (X8) -- (X12) -- (X13);
      \draw (X14) -- (X12);
      \draw (X11) -- (X15);
      \draw (X9) -- (X13);
      \draw (X0) -- (X8);
      \draw (X1) -- (X9);
      \draw (X10) -- (X2) -- (X3) -- (X11);
      \draw (X4) -- (X12);
      \draw (X5) -- (X13);
      \draw (X6) -- (X14);
      \draw (X7) -- (X15);
      \foreach \i in {5,...,9} { \fill (X\i) circle (2.875pt); }
      \foreach \i in {2,11,12} { \fill (X\i) circle (2.875pt); }
    \end{scope}
    \begin{scope}[ForestGreen]
      \foreach \i in {10,13,14,15} { \fill (X\i) circle (2.875pt); }
    \end{scope}
    \begin{scope}[Maroon]
      \draw (X4) -- (X0) -- (X1) -- (X3);
      \foreach \i in {0,1,3,4} { \fill (X\i) circle (2.875pt); }
    \end{scope}
  \end{tikzpicture}
  
  \caption{A maximal 3-robustness structure for four binary inputs.}
  \label{fig:3-robustness}
\end{figure}

$k$-robustness implies $(k+1)$-robustness, and therefore $\Pcal_{k}\subseteq\Pcal_{k+1}$.
This does not mean that all $k$-robustness structures are also $(k+1)$-robustness structures, for the following reason:
If $\Bbf$ is a $k$-robustness structure and $\Scal=\cup\Bbf$, then $G_{k+1,\Scal}$ may have more connected components
than~$G_{k,\Scal}$.
\begin{ex}
  \label{ex:2-not-3}
  Consider $n=4$ binary random variables $X_{1},\dots,X_{4}$.  Then
  \begin{equation*}
    \Bbf := \left\{ \{(1,1,1,1),(2,2,1,1)\}, \{(1,2,2,2), (2,1,2,2)\} \right\}
  \end{equation*}
  is a maximal $2$-robustness structure.  Both elements of $\Bbf$ are connected in $G_{2}$, but not in~$G_{3}$, see
  Fig.~\ref{fig:2-not-3}.
\end{ex}
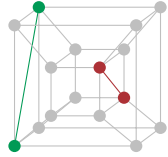
\begin{figure}
  \centering
  \begin{tikzpicture}[scale=0.8]
    \fourcubevert
    \begin{scope}[ForestGreen]
      \draw (X0) -- (X6);
    \end{scope}
    \begin{scope}[lightgray]
      \draw (X0) -- (X1) -- (X3) -- (X2) -- (X0) -- (X4) -- (X5) -- (X7) -- (X6) -- (X4);
      \draw (X3) -- (X7);
      \draw (X1) -- (X5);
      \draw (X2) -- (X6);
      \draw (X8) -- (X9) -- (X11) -- (X10) -- (X8) -- (X12) -- (X13) -- (X15) -- (X14) -- (X12);
      \draw (X11) -- (X15);
      \draw (X9) -- (X13);
      \draw (X10) -- (X14);
      \draw (X0) -- (X8);
      \draw (X1) -- (X9);
      \draw (X2) -- (X10);
      \draw (X3) -- (X11);
      \draw (X4) -- (X12);
      \draw (X5) -- (X13);
      \draw (X6) -- (X14);
      \draw (X7) -- (X15);
      \foreach \i in {1,...,5} { \fill (X\i) circle (2.875pt); }
      \foreach \i in {7,...,10} { \fill (X\i) circle (2.875pt); }
      \foreach \i in {12,14,15} { \fill (X\i) circle (2.875pt); }
    \end{scope}
    \begin{scope}[ForestGreen]
      \foreach \i in {0,6} { \fill (X\i) circle (2.875pt); }
    \end{scope}
    \begin{scope}[Maroon]
      \draw (X11) -- (X13);
      \foreach \i in {11,13} { \fill (X\i) circle (2.875pt); }
    \end{scope}
  \end{tikzpicture}
  
  \caption{The $2$-robustness structure from Example~\ref{ex:2-not-3}.  The graph $G_{2}$ is the graph of a hypercube of
    dimension four, where diagonals have been added to the two-dimensional faces.  Only the edges of $G_{2}$ that
    connect vertices of Hamming distance one are shown, and the edges of~$G_{2,\cup\Bbf}$.  The two blocks are marked in
    green and red.}
  \label{fig:2-not-3}
\end{figure}

Nevertheless, the notions of $l$-robustness and $k$-robustness for $l>k$ are related as follows:

\begin{lemma}
  \label{lem:smallk}
  Assume that $d_{1}=\dots=d_{n}=2$, and let $\Bbf$ be a maximal $k$-robustness structure of binary random
  variables.  Then each $B\in\Bbf$ is connected as a subset of $G_{s}$ for all $s\le n - 2 k + 1$.
\end{lemma}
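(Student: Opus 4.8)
The plan is to reduce to the single extremal value $s=n-2k+1$ and then argue by the number of blocks, the whole point being to replace an explicit path construction by a diameter estimate.

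First I would record the two elementary facts that make the binary case tractable. Since all $\Xcal_i=\{0,1\}$, two states $x,y$ agree on some set of size $\ge r$ precisely when their Hamming distance $d(x,y):=|\{i:x_i\ne y_i\}|$ is at most $n-r$; hence $\{x,y\}$ is an edge of $G_s$ if and only if $d(x,y)\le n-s$. In particular, for $s'\le s$ every edge of $G_s$ is an edge of $G_{s'}$, so connectivity of $B$ in $G_s$ forces connectivity of $B$ in $G_{s'}$. It therefore suffices to treat the largest admissible value $s=n-2k+1$, for which the edges of $G_s$ are exactly the pairs with $d(x,y)\le 2k-1$. Fix a block $B\in\Bbf$; I must show that any two points of $B$ are joined inside $B$ by a path of $G_s$.

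Next I would split into two cases. If $\Bbf$ consists of a single block, maximality forces $\cup\Bbf=\Xin$: adding a point of $\Xin\setminus\cup\Bbf$ can never push the number of connected components below one, so no such point exists. Thus $B=\Xin$, and because $2k-1\ge1$ the distance-one (hypercube) edges already lie in $G_s$, so $B$ is connected. The substantive case is $|\Bbf|\ge 2$, where I claim that every block has Hamming diameter at most $2k-2$. Since distinct blocks are distinct connected components of $G_{k,\Scal}$, there is no $G_k$-edge between them, i.e.\ $d(u,v)\ge n-k+1$ whenever $u,v$ lie in different blocks. Fix $x,y\in B$ and choose any $w$ in another block (possible as $|\Bbf|\ge 2$). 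The binary identity
\[
  d(x,w)+d(y,w)=d(x,y)+2\,\bigl|\{i:x_i=y_i\ne w_i\}\bigr|\le 2n-d(x,y),
\]
valid because on each coordinate with $x_i\ne y_i$ the value $w_i$ disagrees with exactly one of $x_i,y_i$, together with $d(x,w),d(y,w)\ge n-k+1$ yields $2(n-k+1)\le 2n-d(x,y)$, that is $d(x,y)\le 2k-2$. As $2k-2<2k-1$, any two points of $B$ are $G_s$-adjacent, so $B$ is in fact a clique of $G_s$ and a fortiori connected. This settles both cases.

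The one real obstacle is to resist the tempting but awkward route of building explicit $G_s$-paths by inserting geodesic midpoints between $G_k$-adjacent points of $B$: such midpoints need not lie in $\cup\Bbf$, and controlling them through the maximality condition is delicate. The clean device is the diameter bound above, which converts $G_s$-connectivity into the triviality that $B$ is a clique; maximality is then needed only to dispose of the single-block case. In writing this up I would also check the boundary behaviour, namely that singletons are handled trivially and that the standing hypothesis $k\ge 1$ guarantees $s=n-2k+1\le n-1$, so that the hypercube edges indeed survive in $G_s$.
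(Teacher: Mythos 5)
Your proof is correct, but it takes a genuinely different route from the paper's. The paper argues by repeated bisection: given two points of $B$ at Hamming distance $l\ge 2k$, it considers a ``midpoint'' $I_{m}$ with $m=\lceil l/2\rceil$ and uses maximality to force $I_{m}$ into $B$ (showing that otherwise $B\cup\{I_{m}\}$ would again be a $k$-robustness structure, since $I_{m}$ cannot be $G_{k}$-adjacent to any other block), and then recurses to build an explicit $G_{s}$-path inside $B$; maximality is thus invoked at every bisection step. You instead prove a global diameter bound: using the binary identity $d(x,w)+d(y,w)=d(x,y)+2\lvert\{i:x_{i}=y_{i}\neq w_{i}\}\rvert$ together with the fact that points in distinct blocks are at distance at least $n-k+1$, you get $\operatorname{diam}(B)\le 2k-2$ whenever $\lvert\Bbf\rvert\ge 2$, so each block is a clique in $G_{n-2k+1}$; maximality is needed only to settle the single-block case via $\cup\Bbf=\Xin$. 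Your argument is shorter, avoids the unspelled induction in the paper's recursion, and actually yields more: the conclusion holds for arbitrary (not necessarily maximal) $k$-robustness structures with at least two blocks, each block is a clique rather than merely connected, and the range of $s$ improves to $s\le n-2k+2$ in that case. Your remark that $k\ge 1$ must be assumed (so that $s\le n-1$ and the hypercube edges survive in $G_{s}$ for the single-block case) is a legitimate observation about an implicit hypothesis of the lemma; the paper's own proof tacitly assumes the same.
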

\begin{proof}
  We can identify elements of $\Xin$ with binary strings of length $n$.  Denote by $I_{r}$ the string $1\dots10\dots0$ of
  $r$ ones and $n-r$ zeroes in this order.  Without loss of generality assume that $I_{0}, I_{l}$ are two elements of
  $B\in\Bbf$, where $k \le n-l < s \le n - 2k + 1$.  Then $l\ge 2k$, and hence $\lfloor\frac l2\rfloor\ge k$.
  Let $m = \lceil\frac{l}{2}\rceil$.  We will prove that we can replace $B$ by $B\cup\{I_{m}\}$ and obtain another
  $k$-robustness structure.  By maximality this will imply that $I_{0}$ and $I_{l}$ are indeed connected by a
  path in $G_{s}$.

  Otherwise there exists $A\in\Bbf$, $A\neq B$, and $x\in A$ such that $x$ and $I_{m}$ agree in at least $k$ components.
  Let $a$ be the number of zeroes in the first $m$ components of $x$, let $b$ be the number of ones in the components
  from $m+1$ to $l$ and let $c$ be the number of ones in the last $n-l$ components.  Then $I_{m}$ and $x$ disagree in
  $a+b+c \le n-k$ components.  On the other hand, $x$ and $I_{0}$ disagree in $(m-a) + b + c$ components, and $x$ and
  $I_{l}$ disagree in $a + ((l-m)-b) + c \le a + (m-b) + c$ components.  Assume that $a\ge b$ (otherwise exchange
  $I_{0}$ and $I_{l}$ in the following argument).  Then $x$ and $I_{0}$ disagree in at most $m + c \le \lceil\frac{l}{2}\rceil + n - l = n -
  \lfloor\frac{l}{2}\rfloor \le n-k$ components, so $A\cup B$ is connected, in contradiction to the assumptions.
\end{proof}


\subsection*{Acknowledgement}

This work has been supported by the Volkswagen Foundation and the Santa Fe Institute. Nihat Ay thanks David Krakauer and
Jessica Flack for many stimulating discussions on robustness.

\bibliographystyle{IEEEtranS}
\bibliography{general}

\end{document}